\theoremstyle{plain}
\newtheorem{theorem}{Theorem}[section]
\newtheorem{corollary}[theorem]{Corollary}
\newtheorem{proposition}[theorem]{Proposition}
\theoremstyle{definition}
\newtheorem{definition}[theorem]{Definition}
\newtheorem{remark}[theorem]{Remark}
\newtheorem{example}[theorem]{Example}
\newcommand{\N}{\mathbb{N}}
\newcommand{\R}{\mathbb{R}}
\def\.{\cdot}
\def\R{{\mathbb R}}
\def\N{{\mathbb N}}
\def\Z{{\mathbb Z}}
\def\L{{\mathcal L}}
\def\A{{\mathcal A}}
\def\B{{\mathcal B}}
\def\L{{\mathcal L}}
\def\S^1{\mathbb{S}^1}
\def\M{{\mathcal M}}
\def\P{{\mathcal P}}
\def\S{{\mathcal S}}
\def\.{\cdot}
\def\({\left(}
\def\){\right)}
\begin{document}

\title{The involution kernel and the dual potential for functions in the Walters’ family}

\author{L. Y. Hataishi and  A. O. Lopes}

\maketitle

\centerline{Instituto de Matem\'atica e Estat\'istica, UFRGS - Porto Alegre, Brasil.}

\begin{abstract}

First, we set a suitable notation.  
Points in $\{0,1\}^{\mathbb{Z}-\{0\}} =\{0,1\}^\mathbb{N}\times \{0,1\}^\mathbb{N}=\Omega^{-} \times \Omega^{+}$,
are denoted by
 $( y|x) =(...,y_2,y_1|x_1,x_2,...)$, where $(x_1,x_2,...) \in \{0,1\}^\mathbb{N}$, and  $(y_1,y_2,...) \in \{0,1\}^\mathbb{N}$. The bijective map   $\hat{\sigma}(...,y_2,y_1|x_1,x_2,...)= (...,y_2,y_1,x_1|x_2,...)$ is called the bilateral shift and acts on $\{0,1\}^{\mathbb{Z}-\{0\}}$. 
Given $A:  \{0,1\}^\mathbb{N}=\Omega^+\to \mathbb{R}$ we express $A$ in the variable $x$, like $A(x)$.
 In a similar way,
 given $B:  \{0,1\}^\mathbb{N}=\Omega^{-}\to \mathbb{R}$ we express $B$ in the variable $y$, like $B(y)$.
 Finally, given $W: \Omega^{-} \times \Omega^{+}\to \mathbb{R}$, we express $W$
 in the variable $(y|x)$, like $W(y|x)$. By abuse of notation  we write $A(y|x)=A(x)$ and $B(y|x)=B(y).$ The probability $\mu_A$ denotes the equilibrium probability  for $A: \{0,1\}^\mathbb{N}\to \mathbb{R}$.
 
 Given a continuous potential $A: \Omega^+\to \mathbb{R}$, we say that the continuous  potential $A^*: \Omega^{-}\to \mathbb{R}$ is the dual potential of $A$, if there exists a continuous  $W: \Omega^{-} \times \Omega^{+}\to \mathbb{R}$, such that,  for all $(y|x) \in \{0,1\}^{\mathbb{Z}-\{0\}}$
$$
  A^* (y) =  \left[  A \circ \hat{\sigma}^{-1} + W \circ \hat{\sigma}^{-1} - W \right] (y|x).
 $$
 We say that $W$ is an involution kernel for $A$. The function  $W$ allows you
to define an spectral projection in the  linear space of the main eigenfunction of the Ruelle operator for $A$.
Denote by $\theta: \Omega^{-} \times \Omega^{+} \to
 \Omega^{-} \times \Omega^{+}$ the function
 $\theta(...,y_2,y_1|x_1,x_2,...)= (...,x_2,x_1|y_1,y_2,...).$
 We say that $A$ is symmetric if $A^* (\theta(x|y))= A(y|x)= A(x).$ To say that $A$ is symmetric is equivalent to saying that $\mu_A$ has zero entropy production.  Given $A$, we describe explicit expressions for  $W$ and the dual potential $A^*$, for $A$ in a family of functions  introduced by P. Walters. We present conditions for  $A$ to be symmetric and to be of twist type.

\textbf{ Key words}: Involution kernel, dual potential, symmetric potential, Walters' family, entropy production, twist condition. 

\textbf{AMS Subject Classification}: 37D35; 62B10; 60G10.
 
Email of Artur O. Lopes is arturoscar.lopes@gmail.com

Email of Lucas Y. Hataishi is lucas.hataishi@gmail.com

\end{abstract}

\maketitle

\section{Introduction}

The set $\mathbb{N}=\{1,2,3,...,n,..\}$ represents the one-dimensional  unilateral lattice. 
Denote $\Omega = \Omega^{+}= \{0,1\}^\mathbb{N}.$ The set $\{0,1\}$ is the set of symbols (or spins) of the symbolic space $\Omega.$ Points in $\Omega$ are denoted by $(x_1,x_2,x_3,...)$,  $x_j \in \{0,1\},$ $j \in \mathbb{N}.$  In some specific models in Statistical Mechanics the symbol $0$ can represent the spin $-$ and the symbol $1$ can represent the spin $+$.

The natural  metric on $\{0,1\}^\mathbb{N}$ is such that $d(x,y)=2^{-j}$, where $j\in \mathbb{N}$ is the first one such that $ x_j\neq y_j.$

  We denote by $ \Z^-_*$ the set such that $ \Z^-_* \cup \mathbb{N}= \{...,-3,-2,-1,1,2,3,...\}=\mathbb{Z}-\{0\},$ which represents  the one-dimensional bilateral lattice. 
  
Define $\Omega^- :=  \left\lbrace 0,1 \right\rbrace^{\Z^-_*}$, and   endow $\Omega^-$ with a metric space structure analogue to the  metric which was defined for $\Omega$.
	
	
	 The cartesian product $\Omega^- \times \Omega=\Omega^- \times \Omega^{+}$ is denoted by $\hat{\Omega}$, and a general element described  by the ordered pair $((x_n)_{n \in \Z^-_*},(x_n)_{n \in \N} )$. We use the symbol $|$ for a better notation, and pairs can be  written as $((x_n)_{n \in \Z^-_*}|(x_n)_{n \in \N} )$.

It is also natural to identify $\Omega^{-}$ with $\Omega=\Omega^{+}$ and we will do this without mention.
$\hat{\Omega}$ is in some sense a version of $\{0,1\}^{\mathbb{Z}- \{0\}}\sim \{0,1\}^\mathbb{N} \times \{0,1\}^\mathbb{N}=\Omega \times \Omega.$  Under such point of view we prefer the following notation: given $(x_1,x_2,x_3,...)\in \Omega$ and $(y_1,y_2,y_3,...)\in \Omega$, then a general point in
$\hat{\Omega}$ is written as 
$$(y|x)=(...,y_3,y_2,y_1\,|\,x_1,x_2,x_3,...)\in \hat{\Omega}= \Omega^{-} \times \Omega^{+}.$$
	 
We denote by 
$\mathcal{ C}(\Omega)$  the set of continuous functions on  $\Omega$ taking real values. 
Cylinder sets in $\Omega$ are denoted by 
$[a_1,a_2,...,a_n]$, $a_j \in \{0,1\}$, $j=1,2,....,n.$
	 
The {\em natural extension} of a potential $A \in {\mathcal C}(\Omega)= {\mathcal C}(\Omega^+)$ to $\hat{\Omega}$ is the potential $\hat{A}: \hat{\Omega} \to \R$ given by
	\begin{align}
	\hat{A}(y|x) = A(x) \ , \ \forall \ (y|x) \in \hat{\Omega} \ .
	\label{chap1eq1}
	\end{align}

	When $B(y|x)$ does not depend on $y$ we  will  use sometimes the simplified  expression $B(y|x) = B(x).$

	The bijective map   
	$$(...,y_2,y_1|x_1,x_2,...) \to \hat{\sigma}(...,y_2,y_1|x_1,x_2,...)= (...,y_2,y_1,x_1|x_2,...)$$ 
	is called the bilateral shift and acts on $\{0,1\}^{\mathbb{Z}-\{0\}}$.
	
	The  map  $\sigma: \Omega  \to \Omega$, such that
	$$(x_1,x_2,...) \to \sigma (x_1,x_2,x_3,...)= (x_2,x_3,...)$$ 
	is called the unilateral shift and acts on $\Omega^+=\Omega$.

	Given $y=(y_1,y_2,y_3,..),$ we will also consider the function 
	$$x= (x_1,x_2,x_3,..)\to \tau_y(x) = (y_1,x_1,x_2,x_3,..),$$ 
	for $\tau_y:\Omega  \to \Omega$. The function $\tau_y$ is sometimes called an inverse branch.
	
	\begin{definition}
		Given  a continuous function $A :\Omega  \to \R$, if there exist  continuous functions $W: \hat{\Omega} \to \R$ and $A^* :\Omega^- \to \R$, such that, for any $(y|x) \in \Omega$, 
		
		\begin{equation}  \label{treo}
		A^* (y) =  \left[  \hat{A} \circ \hat{\sigma}^{-1} + W \circ \hat{\sigma}^{-1} - W \right] (y|x),
		\end{equation}
then we say that $W$ is an {\bf involution kernel} for $A$, and that $A^*$ is the {\bf  dual potential} of $A$ relatively to $W$.
		\label{chap1defi2}
	\end{definition}

	When $B(y|x)$ does no depend on $x$ we sometimes  will  use the simplified  expression $B(y|x) = B(y)$.

Given $A: \Omega^+ \to \mathbb{R}$, the involution kernel $W$ is not unique. One can show that in the case $A$ is H\"older there exist $W$ and a H\"older function $A^*: \Omega^- \to \mathbb{R}$ satisfying \eqref{treo} (see \cite{BLT}, \cite{CLO}, \cite{LMMS}, \cite{SV}, \cite{LV} and \cite{VV1}). We will consider here  a large class of functions such that some of them are not of of  H\"older class.

 Given $A$ we are interested in explicit expressions for the involution kernel $W$ and for the dual potential $A^*.$

The involution kernel was introduced in \cite{BLT} where it was shown that 
a natural way to obtain an involution kernel $W$ for $A$ is via expression \eqref{chap1eq2} (or \eqref{Wkernel}).

Note that in the case $W$ is an involution kernel for $A$, then, given $\beta \in \mathbb{R}$, we get that $\beta W$ is an involution kernel for $\beta A$.  The value $\beta$ corresponds in Thermodynamic Formalism (and Statistical Mechanics) to $\frac{1}{T}$, where $T$ is temperature,
and $A$ corresponds  to minus the Hamiltonian.
 In the case $A$ is H\"older, using this property  and \eqref{tritri},  large deviation properties when the temperature goes to zero are obtained in \cite{BLT}.
Questions related to the selection of probability (and subaction) when the temperature goes to zero appear in \cite{BLL} and \cite{BLM}.

 Denote by $\theta: \Omega^{-} \times \Omega^{+} \to
 \Omega^{-} \times \Omega^{+}$ the function such that 
 \begin{equation} \label{teter}\theta(...,y_2,y_1|x_1,x_2,...)= (...,x_2,x_1|y_1,y_2,...).
 \end{equation}
 
 We say that $A$ is {\bf symmetric} if $A^* (\theta(x|y))= A(y|x)= A(x).$

Note that considering the set of symbols $\{-1, 1\}  $  instead of $\{0,1\}$ does not change much the above definitions.
		
\begin{example} \label{ezze} (Taken from  Section 5 in \cite{CL}) Consider the alphabet $\{-1,1\}$ and the symbolic space  $\{-1,1\}^\mathbb{N}$. In this case $\hat{\Omega}$ is $\{-1,1\}^{\mathbb{Z}-{0}}$. We will define a potential  $A:\{-1,1\}^\mathbb{N}\to \mathbb{R}$ which is symmetric. Indeed, consider a sequence $a_n>0$, $n \geq 1$, such that $\sum_{i\geq 1}\sum_{j>i} a_j <\infty$, and for $x=(x_1,x_2,x_3,...) \in \{-1,1\}^\mathbb{N}$, we define $A(x) =\sum_{n=1}^\infty a_n x_n$ (it is called  a product type potential). Consider $W: \{-1,1\}^{\mathbb{N}\times \mathbb{N}} \to \mathbb{R}$, given by
	$$ W(y|x)=\sum_{i=1}^\infty [(x_i + y_i) (\sum_{j>i}a_{j})] =\sum_{i=1}^\infty (x_i + y_i) (a_{i+1} + a_{i+2}+....).$$
	Then, one can show that $W$ is an involution kernel and $A$ is symmetric.
	A particular example is when $a_n=2^{-n},$ $n \geq 1$, in which case $A$ is of H\"older class.

\end{example}

It is known that
 eigenfunctions for the Ruelle operator for a H\"older potential  $A: \Omega^{+} \to \mathbb{R}$ and eigenprobabilities for the dual of the Ruelle operator for $A^*:\Omega^{-} \to \mathbb{R}$ are related via the involution kernel (see \cite{BLT} or \cite{LMMS}), which plays the role of a dynamical integral kernel (see expression \eqref{tritri}).

We elaborate on that:  $ \mathcal{L}_A $ denotes the Ruelle operator for $A: \Omega^{+} = \Omega\to \mathbb{R}$, which is the linear operator acting on continuous function $f_0 :\Omega^{+} \to \mathbb{R}  $, such that,
$$f_0 \to f_1(z) =   \mathcal{L}_A (f_0) (z) = \sum_{ \sigma(x)=z} e^{A (x)}  f_0 (x).$$    

When looking for equilibrium probabilities on $\hat{\Omega}$ (which are  invariant for $\hat{\sigma}$) for potentials $\hat{A}:\hat{\Omega} \to \mathbb{R}$ properties of the Ruelle operator, as defined above, are quite useful (see \cite{PP} or Appendix in \cite{L3}).

Given $A^*$, the Ruelle operator  $ \mathcal{L}_{A^*} $ acts on functions $f_0 :\Omega^{-} \to \mathbb{R}$.

The Ruelle theorem for an H\"older function  $A$ claims the existence of a positive eigenfunction $\varphi_A$ for  the operator  $\mathcal{L}_A$ and an  associated to the eigenvalue $\lambda_A>0$. (see \cite{PP} for a proof).  $\varphi_A$ is called the main eigenfunction of $\mathcal{L}_A.$
The dual operator for $A$ is denoted $\mathcal{L}_A^*$ and acts on finite measures. The Ruelle theorem helps to identify the equilibrium probability for the potential $A$ (see \cite{PP}).

Given a  H\"older potential $A$, we say that the probability   $\nu_A$ on $\Omega=\Omega^+$  is the eigenprobability for the dual of the Ruelle operator  $\mathcal{L}_A^*$, if
 $\mathcal{L}_A^*(\nu_A) =\lambda_A\, \nu_A$. Note that from \cite{BLT} (or \cite{LMMS}) the main eigenvalue of the Ruelle operator for $A$ and the  main eigenvalue of the Ruelle operator for its dual $A^{*}$ are the same. The same thing for $\mathcal{L}_A^*$ and $\mathcal{L}_{A^{*}}^*.$ That is $\lambda_A= \lambda_{A^*}$. We denote by $\nu_{A^*}$ the eigenprobability for the dual  operator $\mathcal{L}_{A^{*}}^*.$ The probability $\nu_{A^*}$ is defined on $\Omega^{-}=\Omega.$

Denote by $W=W_A$ the involution for $A$, then,
\begin{equation} \label{tritri} \int e^{ W (y|x)} d \nu_{A^{*}}(y) = \varphi_A(x)
\end{equation}
is the main eigenfunction of the Ruelle operator $\mathcal{L}_A$ (see, for instance \cite{LMMS}, for a proof).  The involution kernel  allows you
to define an spectral projection in the linear space of the main eigenfunction  of the Ruelle operator $\mathcal{L}_A$ (see page 482 in \cite{LOS}).

Note that in equation  \eqref{treo} for the involution kernel the eigenvalue $\lambda_A$ does not appear; this is a property  which is eventually useful when trying to get the main eigenfunction in the study of a particular example of potential $A$.

An interesting fact is that for a general continuous potential $A:\Omega \to \mathbb{R}$ an eigenprobability always exists but a continuous positive eigenfunction not always (see \cite{CDLS}, \cite{CSS} and \cite{CLS}).

Knowing the involution kernel for the potential $A$,  other eigenfunctions for $\mathcal{L}_A$ (not strictly positive) can be eventually obtained via 
eigendistributions for $\mathcal{L}_{A^*}^*$ (see \cite{GLP}).

When $A$ is symmetric we get 
\begin{equation} \label{tritritu} \int e^{ W (y|x)} d \nu_{A}(y) = \varphi_A(x).
\end{equation}
\smallskip

The invariant probability $\mu_A = \varphi_A\, \nu_A$ maximizes topological pressure and is called the equilibrium probability for $A$ (see \cite{PP}). In the case, there exists the limit $\mu_{\beta \,A}$, when $\beta \to \infty$, in the weak-$*$ convergence sense,  we say there exists a selection of probability at temperature zero. 

In \cite{BLT}, using properties of the involution kernel  the authors proved Large Deviation Theorems for the zero-temperature limit in the case there exist selection (\cite{Mengue} considers the case where  it is not assumed  selection). In another direction, in \cite{LM} it is shown that the involution kernel appears as a natural tool for the investigation of entropy production of  $\mu_A$.  If the potential $A$ is symmetric then the entropy production of $\mu_A$ is zero (see  Section 7 in \cite{LM}). General results for entropy production in Thermodynamic Formalism appears in \cite{PoSh}, \cite{Rue}, \cite{Rue1}, \cite{Galla}, \cite{Maes} and \cite{Jiang}.
The case of symbolic spaces where the alphabet is a compact metric space is considered in  \cite{LM}.  

  In Example 2  (product type potentials) and Example 3 (Ising type potentials) in Section 5 in \cite{CL} the authors present examples of potentials  $A$ in $\{-1,1\}^\mathbb{N}$ that are symmetric by exhibiting the explicit expression of the involution kernel (see our Example \ref{ezze}). From this property and results in \cite{LM} it follows that the equilibrium probabilities $\mu_A$ for such potentials have zero entropy production. Explicit expressions for the involution kernel (and the dual potential) where obtained in several sources as in Proposition 9 in \cite{BLT}, Section 5 in \cite{BLLCo},  in Remarks 6 and 7 in \cite{LOT} and in Section 13.2 in \cite{FLO}.
  
	In the works  \cite{LOT}, \cite{CLO} and \cite{LOS}, the involution kernel  $W: \Omega^- \times \Omega \to \mathbb{R}$ is considered as a dynamical cost in an   Ergodic Optimal cost Problem. In the classical setting, a lot of nice results on Optimal Transport are proved under the condition of  convexity of the cost function.   One can define  the Twist Condition (see Definition \ref{kil}) for an involution kernel $W$ (of a potential $A$) and this plays the role of a form of convexity in Ergodic Transport. We will address this issue in Section \ref{twi}. The twist condition is sometimes called the supermodular condition
(see section 5.2 in \cite{Mit}), which a natural hypothesis in optimization problems (see \cite{Bha}), and Aubry-Mather theory (see \cite{CLO}, \cite{LOS} and \cite{LOT}).

Here, we investigate the existence of the involution kernel $W: \Omega^- \times \Omega \to \mathbb{R}$ and we  study  duality and symmetry issues for potentials $g:\{0,1\}^\mathbb{N}\to \mathbb{R}$ in  a certain class of continuous potentials $g$ (see Sections 
\ref{IIn} and \ref{secdualpotential}) to be defined next. We present explicit expressions.

We will consider a potential $g:\{0,1\}^\mathbb{N}\to  \mathbb{R}$ which is at least continuous.
Potentials on the so-called Walters family were introduced in \cite{Wal2} where some explicit results for the main eigenfunction of the Ruelle operator were obtained. For this family of potentials in \cite{CHLS} the authors  study  a certain class of Spectral Triples, and in \cite{BLM} the authors present explicit expressions for subactions in Ergodic Optimization.

\begin{definition} \label{ouy}
A potential $g: \Omega \to \mathbb{R}$ is in the Walters family, if and only if, there exists convergent sequences
$(a_n)_{n \in \mathbb{N}}$, $(b_n)_{n \in \mathbb{N}}$, $(c_n)_{n \in \mathbb{N}}$ and $(d_n)_{n \in \mathbb{N}}$, such that, for any $x \in \Omega$,
\begin{itemize}
\item $g(0^{n+1}1x) = a_{n+1}$ ;
\item $g(01^n0x) = b_n$ ;
\item $g(1^{n+1}0x) = c_{n+1}$ ;
\item $g(10^n1x) = d_n$,
\end{itemize}
for all $n \in \mathbb{N}$.
\label{defi1}

The set of such continuous potentials is described by $R(\Omega).$ 
\end{definition}

Some of these functions $g$ are  not of  H\"older class.

We denote by $a, b, c, d$, respectively, the limits of the sequences  $a_n, b_n, c_n, d_n.$

Theorem 1.1  in \cite{Wal2} describes the condition for the potential $g$ (on the Walters family) to be in the Bowen's class or the Walters' class (do not confuse this set with the class of Walters potentials described by  Definition \ref{ouy}).  Theorem 3.1 in \cite{Wal2} presents conditions on $g$ which will imply that the Ruelle Theorem is true (see \cite{PP}). 

If
\begin{equation} \label{HolW} a_n-a,\, b_n-b,\, c_n-c\, \,\text{and} \,\,d_n-d
\end{equation}
converge to zero exponentially fast to zero, then, the potential $g$ is of 
 H\"older class. In this case the equilibrium states are unique (no phase transition) and the pressure function is differentiable.

The family described by Definition \ref{ouy}  contains a subfamily of potentials called of the Hofbauer potentials  (see \cite{Hof}, \cite{Lo1}, \cite{Lofi}, \cite{FL}, \cite{LopR} and \cite{CL1}) which are not of H\"older class.  For this subclass,  in some cases, there exists more than one equilibrium state (phase transition happens) and the pressure function may not be differentiable. The Hofbauer example corresponds to the following case: take $1<\gamma$, then
consider $c_n=- \gamma\, \log (\frac{n+1}{n})$, $ n \geq 2,$ $d_n = - \gamma \log (2),$ and $a_{n+1} = d_n = \log (\zeta(\gamma)),$ where 
$\zeta(\gamma)$ is the Riemman zeta function. Questions related to renormalization for this class of potentials appear in \cite{BLL1} and \cite{LopR}; the Hofbauer potential is a fixed point for the renormalization operator.

The variety of possibilities of the functions on   the Walters' family of potentials is so rich that given a  certain sequence 
$\mathfrak{c}_n$, $n \in \mathbb{N},$
describing a possible decay of correlations (under some mild assumptions), one can find a potential in the family such the equilibrium probability for this potential  has  this decay of correlation (see \cite{LopR}).

	The potential of Example \ref{ezze} is not in the Walters' family.

\medskip

	
		A positive continuous function  $g : \Omega \to (0,1)$, satisfying
	$ g(0x) + g(1x) = 1$, for all  $x \in \Omega$, is called a 
	$g$-function.
Let $G(\Omega)$ denote the set of all g-functions on the Walters family.
		Corollary 2.3 in \cite{Wal2} (see also our Corollary \ref{veve}) implies  that,  the equilibrium probability $\mu_A$ of the continuous potential $A=\log g$, where $g\in G(\Omega)$ is  on the Walters family,
	satisfies $\forall n \in \mathbb{N}$, 
	and each pair of cylinders $[x_1,x_2,...,x_{n-1},x_n]$ and $[x_n,x_{n-1},...,x_2,x_1]$
	\begin{equation} \label{hahas} \mu_A ([x_1,x_2,...,x_{n-1},x_n])= \mu_A ([x_n,x_{n-1},...,x_2,x_1]).
	\end{equation}
		
		This symmetry on the measure of cylinders means zero entropy production (see \cite{Jiang} and Section \ref{lala}) and is related to the concept of $A$ being symmetric via the involution kernel (see  \cite{LM}). It follows from the reasoning of  \cite{LM}  (using \cite{Jiang} and \eqref{hahas}) that  given a H\"older potential $A$ on the Walters family, there exists an involution kernel $W$ such that makes $A$ symmetric.
	
	Given $x' = (x_1',x_2',...,x_n' ,...)\in \Omega$ fixed, if 
	\begin{align}
	W(y|x)= \sum_{n \in \N} \left[ \hat{A} \circ \hat{\sigma}^{-n} (y|x) - \hat{A} \circ \hat{\sigma}^{-n} (y|x') \right]
	\label{chap1eq2}
	\end{align}
	is convergent for any $(y|x) \in \hat{\Omega}$, it is shown in Section \ref{secdualpotential} that $W$ is an involution kernel for $A$ (see also \cite{LMMS}). It is called the {\bf involution kernel} of $A$ based on $x' \in \Omega$.

	Given a continuous potential $A:\Omega \to \mathbb{R}$, a  more simple expression   for  the involution kernel $W$ for $A$ is
	
	\begin{equation}\label{Wkernel} W(y|x)=\sum_{n\geq 1}A(y_n,...,y_1,x_1,x_2,...)-A(y_n,...,y_1,x_1',x_2',...), \end{equation}
	
	where $x=(x_1,x_2,...,x_n,..)$ and $y=(y_1,y_2,...,y_n,..)$.

	In the case \eqref{Wkernel} converges an expression for a dual potential $A^*$ associated to such involution kernel is given by \eqref{elew}.

	If $A$ is of H\"older class the above sum   \eqref{Wkernel} always converges.
	
	\medskip
	
	An outline of our main results: we investigate the existence and explicit expressions for $W$ (see Section \ref{IIn}) and  the dual potential $A^*$ (see   Tables \eqref{dualpotentialtable1}, \eqref{dualpotentialtable2} and \eqref{dualpotentialtable3} in Subsection \ref{kyu1}), for potentials $A$ in the class $R(\Omega)$.  Note that we just require convergence and not much regularity (like  H\"older continuity). We give a sufficient and necessary condition for the existence of $W$ (see Theorem \ref{okl}). For results about symmetry of the potential $A$ see expression \eqref{chap1sec3subs1eq7} and  Theorem \ref{puxa1}.

	For a potential $g$ in the Walters' family
we present  here  in Section \ref{twi} conditions for a potential of such type to satisfy the relaxed
	twist condition (see Definition \ref{kil} and Theorem \ref{kjg}).
	
Questions related to the characterization of normalized Potentials on $G(\Omega)$ are described in Section \ref{nono} (see expression \eqref{normpotentialeq6}).

	In Section \ref{lala} we consider the sets
	$\M(\sigma)$ and $\M(\hat{\sigma}),$ which are, respectively, the set of Borel invariant probability measures for $\sigma: \Omega \to \Omega$ and $\hat{\sigma}:\hat{\Omega} \to \hat{\Omega}$. The reasoning we followed above deals with the Equilibrium Statistical Mechanics of the
	lattice $\mathbb{N}$, which results in probabilities on $\mu_A\in\M(\sigma)$. On the other hand, the study of equilibrium probabilities on the lattice $\mathbb{Z}-\{0\}$ result in probabilities $\mu_{\hat{A}}$ on $\M(\hat{\sigma}),$ where $\hat{A}: \hat{\Omega}=\{0,1\}^{\mathbb{Z}-\{0\}} \to \mathbb{R}.$ The relation of this two frameworks is the topic of Section
	\ref{lala} (see also appendix  in \cite{L3}).

	\section{Ruelle's Theorem}
	
	We briefly describe some properties of  the  Ruelle's Operator Theorem before addressing the main issues of our paper.
	
	\begin{definition}
		A continuous function $A: \Omega \to \R$ is an element of $\Xi(\Omega)$ if, and only if, for every $\epsilon > 0$, there exists $\delta > 0$ such that, $\forall \ n \in \N$ and $\forall \ x \in \Omega$, $y \in B(x,n,\delta)$ implies
		\begin{align*}
		\left| \sum_{j=0}^{n-1} A(\sigma^j(x)) - \sum_{j=0}^{n-1} A(\sigma^j(y)) \right| < \epsilon \ ,
		\end{align*}
		\label{chap1sec2defi1}
		The set $\Xi(\Omega)$ is the set of potentials with Walters Regularity.
	\end{definition}
	\smallskip
	
	The Hofbauer potential does not have Walters Regularity.
	
	\smallskip
	
	In \cite{Wal2} is proved the following result:
	\begin{theorem}
		$A \in R(\Omega) \cap \Xi(\Omega) \iff \sum_{n \in \N} (a_n - a)$ and $\sum_{n \in \N} (c_n - c)$ are both convergent. If $A \in R(\Omega) \cap \Xi(\Omega)$ then Ruelle's Operator Theorem holds for $A$ and it has a unique equilibrium state.
	\end{theorem}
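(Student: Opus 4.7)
The plan is to exploit the fact that for $A\in R(\Omega)$ the value $A(z)$ is determined by the first one or two symbol-runs of $z$: it equals $a_m$ or $c_m$ when the initial run (of $0$s or of $1$s) has length $m\geq 2$, and equals $b_m$ or $d_m$ when the initial run has length one and is followed by a run of length $m$. By continuity, $A(0^\infty)=a$ and $A(1^\infty)=c$. Both directions of the equivalence will then be obtained by carefully tracking, along a Birkhoff orbit, which of these four regimes each iterate $\sigma^j x$ falls in.

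For the forward implication, I would test Walters regularity of $A$ on the pair $x=0^\infty$, $y^{(N)}=0^{N}1\,0^\infty$, which agree on the first $N$ coordinates. For $j<n\leq N-1$, the iterate $\sigma^j y^{(N)}$ begins with $0^{N-j}1$, whence $A(\sigma^j y^{(N)})=a_{N-j}$, while $A(\sigma^j x)=a$, yielding
\begin{equation*}
\sum_{j=0}^{n-1} A(\sigma^j(y^{(N)})) - \sum_{j=0}^{n-1} A(\sigma^j(x)) = \sum_{l=N-n+1}^{N}(a_l - a).
\end{equation*}
Fixing a Walters modulus $\delta=2^{-k}$, the hypothesis $A\in\Xi(\Omega)$ forces the right-hand side to stay bounded by $\epsilon$ uniformly as $n$ varies with $N-n=k$ held fixed, which is precisely the Cauchy criterion for $\sum(a_l-a)$. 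The symmetric pair $x=1^\infty$, $y^{(N)}=1^{N}0\,1^\infty$ delivers convergence of $\sum(c_l-c)$.

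For the converse, I would assume both sums converge, fix $\epsilon>0$, and choose $K$ so large that $|a_l-a|$, $|b_l-b|$, $|c_l-c|$, $|d_l-d|$ are all less than $\epsilon/8$ for $l\geq K$, and every partial sum of $(a_l-a)$ or $(c_l-c)$ with indices beyond $K$ has modulus less than $\epsilon/8$. Setting $\delta=2^{-K}$, if $y\in B(x,n,\delta)$ then $x$ and $y$ agree on the first $n+K$ coordinates. I would then partition $\{j:0\leq j<n\}$ by the leading-run shape of $\sigma^j x$: (i) if the determining block of $A(\sigma^j x)$ already lies in positions $1,\dots,n+K-j$, then $A(\sigma^j x)=A(\sigma^j y)$, contributing nothing; (ii) if $\sigma^j x$ begins with a run of $0$s extending past position $n+K$, the set of such $j$ is a contiguous block $\{p_1-1,\dots,n-1\}$, where $[p_1,p_2]$ is the $0$-run of $x$ covering position $n+K$, and the summed contribution telescopes into two tail-sums of $(a_l-a)$ with indices $\geq K$, bounded by $\epsilon/4$; (iii) the $1$-run analogue, controlled by $(c_l-c)$; (iv) at most one $j$ each produces a ``long $b$'' or ``long $d$'' discrepancy, bounded via $|b_{l_x}-b|+|b-b_{l_y}|<\epsilon/4$. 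Adding the four contributions yields $|\sum_{j=0}^{n-1}A(\sigma^j(x)) - \sum_{j=0}^{n-1}A(\sigma^j(y))|<\epsilon$.

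The hardest part will be the telescoping step (ii)--(iii): I must verify that the two index windows producing the $a$-tails (for $x$ and for $y$) contain exactly the same number of terms, namely $n-p_1+1$, so that the constant-$a$ parts cancel and only tail partial sums of $(a_l-a)$ survive. I note that no summability is needed for $(b_n)$ or $(d_n)$, since the ``$b$-type'' and ``$d$-type'' indices occur at only a bounded number of iterates along any orbit. Once $A\in R(\Omega)\cap\Xi(\Omega)$ is established, the Ruelle--Perron--Frobenius theorem for Walters-regular potentials (cf.\ \cite{Wal2}, \cite{PP}) supplies the positive eigenfunction of $\mathcal{L}_A$, its simplicity, and the uniqueness of the equilibrium state.
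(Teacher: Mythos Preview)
The paper does not supply its own proof of this statement; it is quoted as a result of Walters \cite{Wal2}. Your sketch is a correct direct argument and, as far as I can tell, follows the same lines as the original.

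Two small points worth tightening. In the forward direction, make explicit that $y^{(N)}\in B(0^\infty,n,2^{-k})$ forces $N-n+1\geq k$, so fixing $N-n=k-1$ yields Birkhoff difference $\sum_{l=k}^{N}(a_l-a)$; letting $N$ vary with $k$ fixed gives exactly the Cauchy criterion for the partial sums $S_N=\sum_{l\leq N}(a_l-a)$. In the converse, your partition (i)--(iv) is the right structure: if $Q$ denotes the position of the last symbol change in the common prefix $w$ of length $M=n+K-1$, then the only indices $j\leq n-1$ with $A(\sigma^j x)\neq A(\sigma^j y)$ are $j=Q-2$ (a single $b$- or $d$-type term with index $\geq K-1$) and the block $j\in\{Q-1,\dots,n-1\}$ (the $a$- or $c$-type terms). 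For the latter, writing $a_{l_x(j)}-a_{l_y(j)}=(a_{l_x(j)}-a)-(a_{l_y(j)}-a)$ with all indices $\geq K$ produces two tail partial sums, each below $\epsilon/8$, confirming your telescoping claim. Note also that cases (ii) and (iii) are mutually exclusive, since only one symbol can terminate $w$, so your $\epsilon$-budget is in fact looser than stated.
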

	
	It follows from our calculations the following result:
	
	\begin{theorem}
		Let $A \in \R(\Omega)$. If, for some $x' = (x'_n)_{n \in \N} \in \Omega$, the series
		\begin{align*}
		\left[ A(y_1 x) - A(y_1 x') \right] + \sum_{n = 2}^\infty \left[ A(y_n...y_1x) - A(y_n...y_1x') \right]
		\end{align*}
		converges absolutely, then it converges absolutely for any $x' \in \Omega$. Also, the convergence of the above series implies $A \in \Xi(\Omega)$, and $A$ has a unique equilibrium state.
		\label{teo2}
	\end{theorem}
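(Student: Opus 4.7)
My plan hinges on the observation that, for $A \in R(\Omega)$, the value $A(z)$ depends only on the first symbol of $z$ and the length of the initial constant run of $z$. Applied to the two arguments $A(y_n \ldots y_1 x)$ and $A(y_n \ldots y_1 x')$, whose first coordinates are both $y_n$, the difference vanishes unless the initial run of the combined sequence extends through the entire $y$-block into the $x$-part, i.e., unless $y_1 = y_2 = \ldots = y_n$. Consequently, whenever $y$ has a finite initial run of $0$'s or $1$'s only finitely many terms of the series are nonzero, and the only $(y|x)$ for which absolute convergence is a substantive issue are $y = 0^\infty$ and $y = 1^\infty$.

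Next, I would record the explicit formulas in the two extremal cases. Using continuity of $A$ at $0^\infty$ and $1^\infty$ to set $a_\infty = a$ and $c_\infty = c$, the identities $A(0^n x) = a_{n + k_0(x)}$ and $A(1^n x) = c_{n + k_1(x)}$ follow directly from the defining properties of $R(\Omega)$, where $k_0(x)$ (resp.\ $k_1(x)$) denotes the length of the initial run of $0$'s (resp.\ $1$'s) in $x$. Thus for $y = 0^\infty$ the series becomes $\sum_{n \geq 1}(a_{n + k_0(x)} - a_{n + k_0(x')})$, and for $y = 1^\infty$ it is the analogous sum in the $c_n$'s; neither the parameters $b_n$ nor $d_n$ ever enter.

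The third step is to show that absolute convergence of the series for every $(y|x) \in \hat{\Omega}$ is equivalent to
\begin{equation*}
\sum_{n \geq 1} |a_n - a| < \infty \qquad \text{and} \qquad \sum_{n \geq 1} |c_n - c| < \infty.
\end{equation*}
Necessity will follow by specializing $(y|x)$: choosing $y = x = 0^\infty$ forces $\sum_n |a - a_{n + k_0(x')}|$ to converge, which is the first condition after an index shift; if instead $x' = 0^\infty$, the symmetric choice $k_0(x) = 0$ does the job. Sufficiency is the triangle inequality $|a_{n+p} - a_{n+p'}| \leq |a_{n+p} - a| + |a - a_{n+p'}|$, valid for all $p, p' \in \{0, 1, 2, \ldots\} \cup \{\infty\}$; the argument for $c$ is identical. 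Since this characterization is phrased purely in terms of the sequences $(a_n), (c_n)$ and makes no reference to $x'$, absolute convergence for some $x'$ is equivalent to absolute convergence for every $x'$.

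Finally, the two absolute-convergence conditions trivially imply convergence of $\sum (a_n - a)$ and $\sum (c_n - c)$, so the theorem of Walters quoted just above places $A$ in $R(\Omega) \cap \Xi(\Omega)$ and delivers both Ruelle's theorem and uniqueness of the equilibrium state. I anticipate the main obstacle to be the bookkeeping in the first two steps: one must justify the formulas $A(0^n x) = a_{n + k_0(x)}$ uniformly in $n$ and in $k_0(x) \in \{0,1,\ldots,\infty\}$ (the case $x = 0^\infty$ requiring continuity of $A$), and confirm that mixed initial $y$-blocks really force the exact cancellation $A(y_n \ldots y_1 x) = A(y_n \ldots y_1 x')$ rather than merely smallness of the difference.
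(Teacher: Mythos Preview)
Your strategy is essentially the paper's own: both arguments compute the series term by term and find that absolute convergence for all $(y|x)$ amounts to $\sum|a_n-a|<\infty$ and $\sum|c_n-c|<\infty$, conditions manifestly independent of $x'$, after which Walters' theorem finishes the job. The paper carries this out via exhaustive case enumeration over cylinder types of $y$ and $x$; your observation that only finitely many terms survive when $y$ has a finite initial run is the same reduction, stated more cleanly.

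There is, however, a bookkeeping error you should repair. Your premise that $A(z)$ depends only on the first symbol and the length of the initial constant run is false: $A(01^m0\ldots)=b_m$ depends on the length of the \emph{second} run. Hence the cancellation $A(y_n\ldots y_1x)=A(y_n\ldots y_1x')$ can also fail at $n=l+1$, where $l$ is the initial run length of $y$; for instance, if $y\in[0^l1]$ then the $(l{+}1)$-st summand is $A(10^l x)-A(10^l x')$, a nontrivial $d$-difference. The paper's explicit formulas for $W$ display exactly this extra term in every finite-$l$ case. Since it is a single additional term, your conclusion that the series is a finite sum for such $y$ survives, and the tail analysis at $y\in\{0^\infty,1^\infty\}$ is unaffected. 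Likewise the identity $A(0^n x)=a_{n+k_0(x)}$ requires $n\geq 2$ or $k_0(x)\geq 1$; when $n=1$ and $x\in[1]$ one picks up a $b$-term instead, but again this touches only the first summand. With these corrections your argument goes through.
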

	
	This result is related to expressions \eqref{Wkernel} and \eqref{poii}.

	\section{Existence of the Involution kernel}
	
	We are going to calculate   formally in Subsection \ref{IIn} the exact expression for the involution kernel (based on a certain point $x'$) of a given potential in $R(\Omega)$. The result also gives a sufficient and necessary condition for the existence of the involution kernel based on a certain point $x'$. We analyze this problem considering Definition \ref{chap1defi2}.
	
	After the calculations in Section \ref{IIn}, the final result is:
	
	\begin{theorem} \label{okl}
	For the involution kernel as in \eqref{chap1eq2} to exist, it is necessary and sufficient that the potential is of Walters regularity.
	\end{theorem}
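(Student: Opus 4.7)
The plan is to prove both directions by a direct analysis of the summands
\[
T_n := A(y_n,\ldots,y_1,x_1,x_2,\ldots) - A(y_n,\ldots,y_1,x'_1,x'_2,\ldots)
\]
of the series \eqref{Wkernel}, using that for $A \in R(\Omega)$ the value $A(z)$ depends only on the lengths of the first two blocks of $z$ (equivalently, on which of the four patterns of Definition~\ref{ouy} matches its initial segment), and invoking the Walters criterion recalled above: $A \in R(\Omega) \cap \Xi(\Omega)$ if and only if $\sum_n (a_n - a)$ and $\sum_n (c_n - c)$ both converge.

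For necessity, I would exhibit explicit triples $(y,x,x')$ isolating each critical series. Taking $y = (0,0,\ldots)$, $x' = (0,0,\ldots)$, and $x = (1,0,0,\ldots)$, the point $(y_n,\ldots,y_1,x_1,x_2,\ldots) = 0^{n}\,1\,0^{\infty}$ has an initial run of $n$ zeros followed by a $1$, so $A$ evaluates to $a_n$ by Definition~\ref{ouy}; meanwhile $(y_n,\ldots,y_1,x'_1,x'_2,\ldots) = (0,0,\ldots)$, and continuity gives the value $a$. Hence $T_n = a_n - a$, and convergence of $W(y|x)$ forces $\sum_n (a_n - a)$ to converge. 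The symmetric choice $y = (1,1,\ldots)$, $x = (0,1,1,\ldots)$, $x' = (1,1,\ldots)$ yields $\sum_n (c_n - c)$. The Walters criterion then gives $A \in \Xi(\Omega)$.

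For sufficiency the decisive observation is that the only $y$'s producing infinitely many nonzero summands $T_n$ are $y = (0,0,\ldots)$ and $y = (1,1,\ldots)$. If $y$ is eventually constant but not identically constant --- say $y_k = 0$ for every $k \geq N$ with $y_{N-1} = 1$ and $N \geq 2$ --- then for $n \geq N$ the concatenation $(y_n,\ldots,y_1,x_1,\ldots)$ starts with exactly $n-N+1$ zeros followed by the letter $y_{N-1} = 1$, so $A$ equals $a_{n-N+1}$ independently of $x$, whence $T_n = 0$; the eventually-$1$ case is symmetric. If $y$ has infinitely many $0$'s and $1$'s, then the first two blocks of $(y_n,\ldots,y_1,x_1,\ldots)$ lie strictly inside the $y$-portion for all sufficiently large $n$, and again $T_n = 0$ eventually. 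In the remaining case $y = (0,0,\ldots)$, letting $k(z) \in \N \cup \{\infty\}$ denote the number of initial zeros of $z$ with the convention $a_\infty := a$, one computes $T_n = a_{n+k(x)} - a_{n+k(x')}$; when both $k(x),k(x')$ are finite the partial sums telescope to a finite limit, while when one of them is $\infty$ they equal, up to a bounded term, tails of $\pm \sum_m (a_m - a)$, which converges by hypothesis. The case $y = (1,1,\ldots)$ is identical with $(c_m,c)$ in place of $(a_m,a)$.

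The main obstacle is verifying the combinatorial claim used in the sufficiency direction for $y$ with infinitely many switches: one must rule out infinitely many $n$ for which the second block of $(y_n,\ldots,y_1,x_1,\ldots)$ spills beyond $y_1$ into $x$. This reduces to observing that such a spillover requires $y$ to have an initial constant block of length $K$ immediately followed by an opposite-symbol run extending to position $n$; since $y$ is not eventually constant, the latter run has finite length $L$, so spillover is confined to the $L$ indices $n \in \{K+1,\ldots,K+L\}$, and at all other $n$ the value of $A$ is determined by $y$ alone, forcing $T_n = 0$.
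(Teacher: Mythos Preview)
Your structural approach is genuinely different from the paper's and in some ways cleaner. The paper proceeds by exhaustive case-by-case computation: it fixes $x' = 0^\infty$, evaluates $W(y|x)$ explicitly for every combination of cylinders containing $y$ and $x$ (equations \eqref{chap1sec1subsec1eeq1}--\eqref{chap1sec1subsec1eeq11}), reads off that the only infinite series appearing are governed by $\sum(a_n-a)$ and $\sum(c_n-c)$, and then repeats the exercise for $x' \in [0^\alpha 1]$ to check the conditions are unchanged. Your classification of $y$ into three types, with the observation that only $y \in \{0^\infty,1^\infty\}$ can yield infinitely many nonzero $T_n$, explains \emph{why} only the $a$- and $c$-tails matter and handles all base points $x'$ at once. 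The paper's approach, on the other hand, produces the explicit closed forms for $W$ needed later for the dual-potential tables.

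There is one real gap in your necessity argument. The theorem concerns the kernel based at a \emph{fixed} $x'$: you must show that for any single base point, convergence of $W(y|x)$ for all $(y,x)$ forces both series to converge. Instead you choose $x'=0^\infty$ to extract the $a$-series and then switch to $x'=1^\infty$ for the $c$-series, which a priori only proves the weaker statement that existence for \emph{all} base points implies Walters regularity. The repair is already implicit in your sufficiency analysis: with $x'$ fixed and $y=1^\infty$, your formula $T_n = c_{n+\ell(x)} - c_{n+\ell(x')}$ (convention $c_\infty=c$) shows that taking $x=1^\infty$ when $\ell(x')<\infty$, or any $x$ with $\ell(x)=0$ when $x'=1^\infty$, produces a series equivalent to $\sum(c_n-c)$; likewise for $y=0^\infty$ and the $a$-series. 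Finally, a few index slips: for $n=1$ (and $n=N$ in the eventually-constant case) the concatenation lands in the $b$- or $d$-pattern rather than $a$ or $c$, so your formulas hold only from the second term on; and in the infinitely-many-switches case the indices where $T_n$ can be nonzero are actually $n\le K+1$, not $\{K+1,\ldots,K+L\}$ --- for $K+2\le n\le K+L$ the first block of $(y_n,\ldots,y_1,x)$ has length $n-K\ge 2$, so $A$ is already determined regardless of whether the second block spills. None of these affect the conclusion.
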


	We investigate the existence of involution kernels of the form \eqref{chap1eq2} for the potentials in $R(\Omega)$. We give a sufficient and necessary condition for its existence and calculate, when this condition is satisfied, the exact expression of the involution kernel.
	\begin{definition}
	We call  $W$ be the involution kernel of $A$ based at the point $x' \in \Omega$, the function such that, for any ordered pair $(y|x) \in \hat{\Omega}$,

\begin{equation} \label{poii} W(y|x)=\, \sum_{n=1}^\infty \left[ A \circ \hat{\sigma}^{-n}(y|x) - A \circ \hat{\sigma}^{-n}(y|x') \right],
\end{equation}
in the case the sum converges.
\end{definition}

See \eqref{chap1sec1subsec1eeq3},  \eqref{chap1sec1subsec1eeq11} and \eqref{oiay} for affirmative cases with explicit expressions.

	\begin{definition}
		If $W$ is an involution kernel for $A \in R(\Omega)$, based at $x'$, we define the dual potential $A^*: \Omega \to \R$ to be the potential given by
		\begin{align}
		A^*(y) := A^*(y|x) = A(\tau_y(x)) + W \circ \hat{\sigma}^{-1} (y|x) - W(y|x) \ ,
		\label{poio}
		\end{align}
		(in the case the sum converges)
		which one can show does not depend on  $x$ (see \eqref{opiu} and \eqref{opiu1}).
		\label{chap1sec3defi1}
	\end{definition}

	Conditions for symmetry for the Walters' family  are given by \eqref{chap1sec3subs1eq7}.
	
	One can show that when $A$ is of  H\"older class, then expressions \eqref{poii} and  \eqref{poio} are well defined and they are, respectively, an involution kernel and a dual potential for $A$ (see for instance \cite{LMMS}).

	\section{Calculation of the Involution Kernel for the Walters' family} \label{IIn}
	
	We calculate formally an exact expression for of the involution kernel of a potential in $R(\Omega)$ via expression \eqref{poii}. The result gives a sufficient and necessary condition for the existence of the involution kernel  on the base point $x'$. We analyze different choices of $x'$.
	
	Let $A \in R(\Omega)$ be defined by convergent sequences of real numbers
	$$(a_n)_{n \in \N},(b_n)_{n \in \N},(c_n)_{n \in \N}\,\text{and}\,(d_n)_{n \in \N}.$$
	
	\subsection{First Case: $x' = 0^\infty$}
	
	With this choice of base point, $x = 0^\infty \implies W(\. | x) \equiv 0$. For $y = 0^\infty$ and $x \in [0^k1]$, with $k \in \N$, 
	\begin{align*}
	A(y_n...y_1x) - A(y_n...1x') 
	= A(0^{k+n}1...) - A(0^\infty)
	= a_{k+n} - a
	\end{align*}  
	\begin{align}
	W(0^\infty | [0^k1]) = \sum_{n \in \N} (a_{n+k} - a) \ .
	\label{chap1sec1subsec1eeq1}
	\end{align}
	If $y = 0^\infty$ and $x = 1^\infty$, then, since $A(01^\infty) - A(0^\infty)$ and $A(0^n1^\infty) - A(0^\infty) - a_n - a$ for natural numbers $n > 1$,
	\begin{align}
	W(0^\infty|1^\infty) = b-a + \sum_{n=2}^\infty (a_n - a) \ .
	\label{chap1sec1subsec1eeq2}
	\end{align}
	If $y = 0^\infty$ and $x \in [1^k0]$, $k \in \N$, then
	\begin{align}
	W(y|x) = b_k - a + \sum_{n=2}^\infty ( a_n - a) \ ,
	\label{chap1sec1subsec1eeq3}
	\end{align}
	since $A(01^k0...) - A(0^\infty) = b_k - a$ and $A(0^n 1^k 0...) - A(0^\infty) = a_n - a$ if $n > 1$.
	We obtained the expressions for the involution kernel when the first coordinate is the point $y = 0^\infty$.
	
	Consider now $y \in [0^l 1]$, with $l \in \N$. If $x \in [0^k 1]$. If $1 \leq n \leq l$, 
	\begin{align*}
	A(y_n...y_1x) - A(y_n...y_1x') = A(0^{n+k}1...) - A(0^\infty) = a_{n+k} - a \ .
	\end{align*} 
	Now
	\begin{align*}
	A(y_{l+1}...y_x) - A(y_{l+1}...y_1x') = A(10^{l+k}1...) - A(1 0 ^\infty) = d_{l+k} - d \ .
	\end{align*}
	The next terms are null, so
	\begin{align}
	W([0^l1]|[0^k1]) = \sum_{n=1}^l (a_{n+k} - a) + d_{l+k} - d \ .
	\label{chap1sec1subsec1eeq4}
	\end{align}
	
	Still for $y \in [0^l1]$, let $x = 1^\infty$. In this case, $A(y_1x) - A(y_1x') = A(01^\infty) - A(0^\infty) = b-a$. If $1 < n \leq l$, 
	\begin{align*}
	A(y_n...y_1x) - A(y_n...y_1x') = A(0^n 1^\infty) - A(0^\infty) = a_n - a \ .
	\end{align*}
	\begin{align*}
	A(y_{l+1}...y_1x) - A(y_{l+1}...y_1x') = A(10^l 1^\infty) - A(1 0^\infty) = d_l - d \ ,
	\end{align*}
	and all the other therm are null. Therefore,
	\begin{align}
	W([0^l1]| 1^\infty) = (b-a) + \sum_{n=2}^l (a_n - a) + d_l - d \ .
	\label{chap1sec1subsec1eeq5}
	\end{align}
	
	If $y \in [0^l1]$ and $x \in [1^k0]$, then
	\begin{align*}
	A(y_1x) - A(y_1x') = A(y_1x') = A(01^k0...) - A(0^\infty) = b_k - a
	\end{align*}
	For $1 < n \leq l$,
	\begin{align*}
	A(y_n...y_1x) - A(y_n...y_1) = A(0^n1^k0...) - A(0^\infty) = a_n - a \ .
	\end{align*}
	\begin{align*}
	A(y_{l+1}...y_1x) - A(y_{l+1}...y_1x') = A(10^{l}1^k0...) - A(10^\infty) = d_l -d \ .
	\end{align*}
	Since the other terms are zero, we conclude
	\begin{align}
	W([0^l 1]|[1^k0]) = b_k - a + \sum_{n=2}^l (a_n - a) + d_l - d \ .
	\label{chap1sec1subsec1eeq6}
	\end{align}
	
	The above equation closes the case $y \in [0]$. We proceed in a similar way to calculate it for $y \in [1]$.
	
	If $y = 1^\infty$ and $x \in [0^k 1]$, with $k \in \N$, then
	\begin{align*}
	A(y_n...y_1 x) - A(y_n...y_1x') = A(1^n 0^k 1...)-A(1^n 0^\infty) = \begin{cases}
	d_k - d,\ \text{if $n=1$} \\
	0,\ \text{if $n > 1$} 
	\end{cases}\ .
	\end{align*}
	
	First, consider $y = x = 1^\infty$.
	\begin{align*}
	A(y_n...y_1x) - A(y_n...y_1x') = A(1^\infty) - A(1^n 0^\infty) = \begin{cases}
	c - d \ , \text{if $n = 1$} \\
	c - c_n \ , \text{if $n > 1$} 
	\end{cases} \ ,
	\end{align*}
	which implies
	\begin{align}
	W(1^\infty | 1^\infty) = c-d + \sum_{n=2}^{\infty} (c - c_n) 
	\label{chap1sec1subsec1eeq7}
	\end{align}
	
	If $y = 1^\infty$ and $x \in [1^k0]$, then
	\begin{align*}
	A(y_1x) - A(y_1x') = A(1^{1+k} 0...) - A(10^\infty) = c_{k+1} - d \ ,
	\end{align*}
	and 
	\begin{align*}
	A(y_n...y_1x) - A(y_n...y_1x') = A(1^{n+k}0...) - A(1^n0^\infty) = c_{n+k} - c_n \ ,
	\end{align*}
	for $n > 1$. We have, therefore,
	\begin{align}
	W(1^\infty|[1^k0]) = c_{k+1} - d + \sum_{n=2}^\infty (c_{n+k}  - c_n) \ .
	\label{chap1sec1subsec1eeq8}
	\end{align}
	
	Consider now the case $y \in [1^l0]$ and $x \in [0^k1]$, for $l,k \in \N$.
	\begin{align*}
	A(y_1x) - A(y_1x') = A(10^k1...) - A(10^\infty) = d_k - d \ .
	\end{align*}
	If $1 < n \leq l$, then
	\begin{align*}
	A(y_n...y_1x) - A(y_n...y_1x') =A(1^n0^k1...) - A(1^n 0^\infty)  =c_n - c_n = 0 \ ,
	\end{align*}
	so
	\begin{align}
	W([1^l0]|[0^k1]) = d_k - d \ .
	\label{chap1sec1subsec1eeq9}
	\end{align}
	
	If $y \in [1^l0]$ and $x = 1^\infty$, 
	\begin{align*}
	A(y_1x) - A(y_1x') = A(1^\infty) - A(10^\infty) = c - d \ .
	\end{align*}
	For $1 < n \leq l$, then 
	\begin{align*}
	A(y_n...y_1x) - A(y_n...y_1x') = A(1^\infty) - A(1^n 0^\infty)  = c - c_n \ .
	\end{align*}
	\begin{align*}
	A(y_{l+1}...y_1x) - A(y_{l+1}...y_1x') = A(01^\infty) - A(01^l0^\infty) = b-b_l \ .
	\end{align*}
	The other terms are zero, so
	\begin{align}
	W([1^l0]|[0^k1]) = c-d + \sum_{n=2}^l (c - c_n) + b-b_l \ .
	\label{chap1sec1subsec1eeq10}
	\end{align}
	
	Let $y \in [1^l0]$ and $x \in [1^k0]$. 
	\begin{align*}
	A(y_1x) - A(y_1x') = A(1^{k+1}0...) - A(10^\infty) = c_{k+1} - d \ .
	\end{align*}
	For $1 < n \leq l$,
	\begin{align*}
	A(y_n...y_1x) - A(y_n...y_1x') = A(1^{n+k}0...) - A(1^n0^\infty) = c_{n+k} - c_n \ . 
	\end{align*}
	\begin{align*}
	A(y_{l+1}...y_1x) - A(y_{l+1}...y_1x') = A(01^{l+k}0...) - A(01^l0^\infty) = b_{l+k} - b_l \ .
	\end{align*}
	The other terms are zero, so
	\begin{align}
	W([1^l0]|[1^k0]) = c_{k+1} - d + \sum_{n=2}^{l}(c_{n+k} - c_n) + b_{l+k} - b_l
	\label{chap1sec1subsec1eeq11}
	\end{align}
	
	Equations \eqref{chap1sec1subsec1eeq1} - \eqref{chap1sec1subsec1eeq11} gives the involution kernel, and the convergence conditions are needed only in equations \eqref{chap1sec1subsec1eeq1}, \eqref{chap1sec1subsec1eeq2}, \eqref{chap1sec1subsec1eeq3}, \eqref{chap1sec1subsec1eeq8}. They are
	\begin{align}
	\exists \sum_{n \in \N} |a_n - a| \ , \ \exists \sum_{n \in \N} |c_n  - c| \ , \ \exists \sum_{n \in \N}|c_{n+k} - c_n| \ \forall \ k \in \N \ .
	\end{align}
	Observe, however, that the third convergence condition is implied by the second, since $|c_{n+k} - c_n| \leq |c_{n+k} - c| + |c - c_n|$. 
	
	All the calculations were made with the involution kernel based at $x' = 0^\infty$. Thus, it is important to investigate if the convergence hypothesis change if other point $x'$ is fixed as a base point. It is sufficient to consider $x' = [0^\alpha1]$ for some natural number $\alpha$. The other cases are obtained from these two by permutation of symbols $a \leftrightarrow c$ and $b \leftrightarrow d$.
	
	\subsection{Second case: $x' \in [0^\alpha1]$}
	
	We present below the calculations of the involution kernel based at a point $x' \in [0^\alpha1]$.   
	
	\begin{align*}
	W(10^l|0^k1) &= \sum_{j=1}^l \left[ A(0^{j+k}1...) - A(0^{j+\alpha}1...) \right] + A (10^{l+k}1...) - A(10^{j+\alpha}1...) \\
	&= \sum_{j=1}^l (a_{j+k} - a_{j+\alpha}) + d_{l+k} - d_{l+\alpha} \ .
	\end{align*}
	
	\begin{align*}
	W(  10^l | 1^k0 ) &= \left[ A(01^k0...) - A(0^{\alpha+1} 1 ...) \right] + \sum_{j=2}^l \left[ A(0^{j}1^k0) - A(0^{j+\alpha}1...) \right] + \\
	&+ \left[ A(10^l1^k0...) - A(10^{l+\alpha}1...) \right] \\
	&= b_k - a_{\alpha+1} + \sum_{j=2}^l (a_j - a_{j+\alpha}) + d_l - d_{l+\alpha} \ .
	\end{align*}
	
	\begin{align*}
	W(01^l|0^k1 )&= A(10^k1...)-A(10^\alpha1...) + \sum_{j=2}^l \left[ A(1^j0^k1...) - A(1^j0^\alpha1...) \right] \\
	&= A(10^k1...)-A(10^\alpha1...) \\
	&= d_{k} - d_\alpha \ .
	\end{align*}
	
	\begin{align}
	W( 01^l | 1^k0 ) &= A(1^{k+1}01...) - A(10^\alpha1...) + \sum_{j=2}^l \left[ A(1^{k+j}0...) - A(1^j0^\alpha1...) \right] + \\
	&+ A(01^{k+l}0...) - A(01^l0^\alpha1...) \\
	&=  c_{k+1} - d_\alpha + \sum_{j=2}^l(c_{k+j} - c_j) + (b_{k+l} - b_l). \label{oiay}
	\end{align}
	All other remaining cases are obtained from the previous one by limits. The results are summarized in Table \ref{dualpotentialtable2}.

	We denote the involution kernel based at $x \in [0^\alpha1]$ by $W_\alpha$. From all those equations, we can conclude two things:
	\begin{itemize}
		\item[(1)] there is no change in the convergence hypothesis needed to assert the existence of the involution kernel;
		\item[(2)] the sequence of functions $(W_\alpha)_{\alpha \in \N}$, assuming that they exists (equivalently, if one of them exists) converges uniformly to $W$, the involution kernel based at $x' = 0^\infty$.
	\end{itemize}

	\section{The dual Potential}
	\label{secdualpotential}
	
	Let $A \in R(\Omega)$ be a potential that admits involution kernel. 
	
	In \cite{BLT} (see also \cite{LMMS}) it was shown that 
	\begin{proposition} Given a continuous potential $A:\Omega \to \mathbb{R}$ and $x' \in \Omega$, if 
 $A^*$ is well defined when given by
 $$A^*(y) =A^*(y_1,y_2,...) = $$
 $$ A(y_1,x_1',x_2',...) + [A(y_2,y_1,x_1',x_2',...) - A(y_2,x_1',x_2',...)]+ ..$$
 \begin{equation}\label{elew} [A(y_n,...,y_2,y_1,x_1',x_2',...) - A(y_n,...,y_2,x_1',x_2',...)]+...,
 \end{equation}
 then $A^*$  is  a dual potential for $A$ (considering  the involution kernel  $W$ given by \eqref{poii}).
\end{proposition}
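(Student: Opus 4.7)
The plan is to verify equation \eqref{treo} by substituting the series \eqref{poii} for $W$, computing $W\circ\hat{\sigma}^{-1}(y|x)-W(y|x)$ at the level of partial sums, and showing the limit equals $A^*(y)-\hat{A}\circ\hat{\sigma}^{-1}(y|x)$ with $A^*$ as in \eqref{elew}. This is essentially an algebraic manipulation, but one that must be done on partial sums because neither series is assumed absolutely convergent.

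First I would unwind $\hat{\sigma}^{-n}(y|x)=(\ldots,y_{n+2}\,|\,y_n,\ldots,y_1,x_1,x_2,\ldots)$, so that \eqref{poii} reads
\begin{align*}
W(y|x)=\sum_{n=1}^{\infty}\bigl[A(y_n,\ldots,y_1,x_1,x_2,\ldots)-A(y_n,\ldots,y_1,x_1',x_2',\ldots)\bigr].
\end{align*}
Evaluating the same formula at $\hat{\sigma}^{-1}(y|x)=(\ldots,y_3,y_2\,|\,y_1,x_1,x_2,\ldots)$ and reindexing $m=n+1$ gives
\begin{align*}
W\circ\hat{\sigma}^{-1}(y|x)=\sum_{m=2}^{\infty}\bigl[A(y_m,\ldots,y_1,x_1,x_2,\ldots)-A(y_m,\ldots,y_2,x_1',x_2',\ldots)\bigr].
\end{align*}

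Letting $P_N$ and $Q_N$ denote the respective $N$-th partial sums, careful bookkeeping yields
\begin{align*}
Q_N-P_N &= A(y_{N+1},\ldots,y_1,x_1,\ldots)-A(y_{N+1},\ldots,y_2,x_1',\ldots) \\
&\quad-A(y_1,x_1,\ldots)+A(y_1,x_1',\ldots) \\
&\quad+\sum_{n=2}^{N}\bigl[A(y_n,\ldots,y_1,x_1',\ldots)-A(y_n,\ldots,y_2,x_1',\ldots)\bigr].
\end{align*}
The second and third lines tend, under the hypothesis that $A^*$ is well defined, to $-\hat{A}\circ\hat{\sigma}^{-1}(y|x)+A^*(y)$; rearranging the resulting identity delivers \eqref{treo}.

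The main obstacle is the pair of surviving ``tail'' terms $A(y_{N+1},\ldots,y_1,x_1,\ldots)-A(y_{N+1},\ldots,y_2,x_1',\ldots)$: since absolute convergence is not assumed, they cannot be discarded by rearrangement. However, the two arguments agree on their first $N$ coordinates and therefore lie within distance $2^{-(N+1)}$ in the symbolic metric, whence uniform continuity of $A$ on the compact space $\Omega$ forces their difference to vanish as $N\to\infty$. A pleasant by-product of the identity is that the right-hand side of \eqref{poio} depends on $x$ only through the cancelling term $\hat{A}\circ\hat{\sigma}^{-1}(y|x)$, confirming the independence from $x$ promised in Definition \ref{chap1sec3defi1}.
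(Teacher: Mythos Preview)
Your argument is correct and is essentially the same computation the paper carries out after stating the proposition: expand $W$ and $W\circ\hat{\sigma}^{-1}$ via \eqref{poii}, telescope the partial sums so that only the tail $A(y_{N+1},\ldots,y_1,x)-A(y_{N+1},\ldots,y_2,x')$ and the partial sum of \eqref{elew} survive, and kill the tail by uniform continuity of $A$ on the compact $\Omega$. Your treatment of the tail term is in fact slightly more explicit than the paper's, which simply notes that the limit \eqref{opiu} ``does not depend on $x$'' by continuity; otherwise the two arguments coincide.
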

	
	 We are interested here in explicit expressions for the dual potential $A^*$ (which will be presented in Section \ref{kyu1}). Let $W$ be the involution kernel of $A$ based at the point $x' \in \Omega$. Then, for any ordered pair $(y|x) \in \hat{\Omega}$, 
	
$$A(\tau_y(x)) + W \circ \hat{\sigma}^{-1} (y|x) - W(y|x) = $$
	$$ A(\tau_y(x)) + \sum_{n=1}^\infty [ \hat{A} \circ \hat{\sigma}^{-n}(\sigma(y)|\tau_y(x)) -$$
	$$ \hat{A} \circ \hat{\sigma}^{-n}(\sigma(y)|x') ] - \sum_{n=1}^\infty \left[ \hat{A} \circ \hat{\sigma}^{-n}(y|x) - \hat{A} \circ \hat{\sigma}^{-n}(y|x') \right] = $$
	$$
  A(\tau_y(x)) +  \lim_{N \to \infty} \sum_{n=1}^N \left[ \hat{A} \circ \hat{\sigma}^{-(n+1)}(y|x) - \hat{A} \circ \hat{\sigma}^{-n}(\sigma(y)|x') \right. -$$
  $$ \left. \hat{A} \circ \hat{\sigma}^{-n}(y|x) + \hat{A} \circ \hat{\sigma}^{-n}(y|x') \right] \ ,$$
  
	since $\sum \alpha_n + \sum \beta_n = \sum(\alpha_n + \beta_n)$ if the series are convergent. But, for any natural number $N$,
	\begin{align*}
	A(\tau_y(x)) + \sum_{n=1}^N \left[ \hat{A} \circ \hat{\sigma}^{-(n+1)}(y|x) - \hat{A} \circ \hat{\sigma}^{-n}(\sigma(y)|x') - \hat{A} \circ \hat{\sigma}^{-n}(y|x) + \hat{A} \circ \hat{\sigma}^{-n}(y|x') \right] &= \\
	A(\tau_{y,N+1}(x)) + \sum_{n=1}^N \left[ \hat{A} \circ \hat{\sigma}^{-n}(y|x') - \hat{A} \circ \hat{\sigma}^{-n}(\sigma(y)|x') \right] \ .
	\end{align*}
	Therefore, if there is an involution kernel based on $x'$, the limit
	\begin{equation} \label{opiu}
	\lim_{N \to \infty} A(\tau_{y,N+1}(x)) + \sum_{n=1}^N \left[ \hat{A} \circ \hat{\sigma}^{-n}(y|x') - \hat{A} \circ \hat{\sigma}^{-n}(\sigma(y)|x') \right] \
	\end{equation}
	must exists and, by continuity of $A$, it does not depend on $x$. This shows that the function $\hat{A}^* :\hat{\Omega} \to \R$, given by
	\begin{align}
	A^* (y)\,=\,\hat{A}^*(y|x) := A(\tau_y(x)) + W \circ \hat{\sigma}^{-1} (y|x) - W(y|x) \ ,\label{opiu1}
	\end{align}
	is independent of $x$ and is a dual potential for $A$.
	
	\subsection{Calculation of the Dual Potential  for the Walters' family}
\label{kyu1}
	
		Let $W$ be an involution kernel for $A \in R(\Omega)$ and $A^*$ be the dual potential of $A$ associated to $W$.

	The tables \eqref{dualpotentialtable1}, \eqref{dualpotentialtable2} and \eqref{dualpotentialtable3},  to follow, shows our results in the calculation of dual potentials and helps our future analysis of the conditions for symmetry (see for instance Theorem \ref{puxa1}).

	\begin{definition} The potential $A$ is {\bf symmetrized} by $W$ if, and only if, $A^*  = A \circ \theta$. If $W$ is the involution kernel based at $x'$ and $A$ is symmetrized by $W$, we say that $A$ is symmetric relatively to $x'$.
		\label{chap1sec3subsec1defi1}
	\end{definition}
	
	\begin{table}[h]
	\begin{center}
		\begin{tabular}{c|c|c}
		\hline
			$x$ & $A$ & $A^* \circ \theta^{-1}$ \\
			\hline
			$0^\infty$ & $a$ & $a$ \\
			\hline
			$0^{n+1}1z$ & $a_{n+1} $ & $a$ \\
			\hline
			$0 1^n 0 z$ & $b_n$ & $a$ \\
			\hline
			$01^\infty$ & $b$ & $a$ \\
			\hline
			$1^\infty$ & $c$ & $c$ \\
			\hline
			$1^{n+1}0z$ & $c_{n+1}$ & $c_{n+1} + b_{n+1} - b_n$ \\
			\hline
			$10^n1z$ & $d_n$ & $(b_1 - a) + \sum_{j=2}^n (a_j - a) + d_n$ \\
			\hline
			$10^\infty$ & $d$ & $ d + (b_1 - a) + \sum_{j=2}^\infty (a_j - a)$ \\
			\hline
		\end{tabular}
		\caption{Dual Potential associated to the Involution Kernel based at $x' = 0^\infty$}
		\label{dualpotentialtable1}
		\end{center}
		\end{table}
	
	$\bullet$ Conditions for symmetry:
	The condition for symmetry is that the second column equals the third. The first two lines shows that $b = b_n = a_{n+1} = a$ for all natural numbers $n$. If these conditions hold, the other equations are trivially satisfied.

		\begin{table}[h]
		\begin{center}
		\begin{tabular}{c|c|c}
		\hline
			$x$ & $A$ & $A^* \circ \theta^{-1}$ \\
			\hline
			$0^\infty$ & $a$ & $a$ \\
			\hline
			$0^{n+1}1z$ & $a_{n+1}  $ & $a_{\alpha + n +1} + (d_{\alpha + n + 1} - d_{\alpha + n})$ \\
			\hline
			$0 1^n 0 z$ & $b_n$ & $a_{\alpha + 1} + (d_{\alpha + 1} - d_\alpha)$ \\
			\hline
			$01^\infty$ & $b$ & $a_{\alpha + 1} + (d_{\alpha + 1} - d_\alpha)$ \\
			\hline
			$1^\infty$ & $c$ & $c$ \\
			\hline
			$1^{n+1}0z$ & $c_{n+1}$ & $c_{n+1} + b_{n+1} - b_n$ \\
			\hline
			$10^n1z$ & $d_n$ & $d_\alpha + (b_1 - a_{\alpha+1}) + \sum_{j=2}^n (a_j - a_{\alpha + j}) + (d_n - d_{\alpha + n})$ \\
			\hline
			$10^\infty$ & $d$ & $ d_\alpha + (b_1 - a_{\alpha+1}) + \sum_{j=2}^\infty (a_j - a_{\alpha + j})$ \\
			\hline
		\end{tabular}
		\caption{Dual Potential associated to the Involution Kernel based at $x' = 0^\alpha 1 w$}
		\label{dualpotentialtable2}
		\end{center}
		\end{table}

	As should be expected, the first table is obtained from the second, making $\alpha \to \infty$. The first result of the above calculations is that the dual of a Walter's potential is also a Walter's potential.

	$\bullet$ Conditions for symmetry:
	The condition for symmetry is that the second column equals to the third. Then
	\begin{align*}
	b_n = a_{\alpha + 1} + (d_{\alpha + 1} - d_\alpha) = b \ ,
	\end{align*}
	that is, $b_n = b$ for all $n \in \N$, is a necessary condition.
	
	\begin{align*}
	a_{n+1} = a_{\alpha + n +1} + d_{\alpha + n +1} - d_{\alpha+ n} \iff a_{\alpha + n +1} - a_{n+1} = d_{\alpha + n} - d_{\alpha + n +1} \ , \ \forall \ n \in \N \ .
	\end{align*}
	
	Also, for all $n \in \N$
	\begin{align*}
	d_\alpha + (b_1 - a_{\alpha+1}) + \sum_{j=2}^n (a_j - a_{\alpha + j}) = d_{\alpha + n}  \ ,
	\end{align*}
	and
	\begin{align}
	d = d_\alpha + b - a_{\alpha + 1} + \sum_{j=2}^\infty (a_j - a_{\alpha+j}) \ .
	\label{chap1sec3subs1eq6}
	\end{align}
	Then the conditions for symmetry in this case is the validity of the system of equation
	\begin{align}
	\begin{cases}
	a_{\alpha + n +1} - a_{n+1} = d_{\alpha + n} - d_{\alpha + n +1} \\
	b_n = a_{\alpha + 1} + (d_{\alpha + 1} - d_\alpha) = b \\
	d_{\alpha + n} = d_\alpha + (b_1 - a_{\alpha+1}) + \sum_{j=2}^n (a_j - a_{\alpha + j}) \\
	d = d_\alpha + b - a_{\alpha + 1} + \sum_{j=2}^\infty (a_j - a_{\alpha+j})
	\end{cases} \ ,
	\label{chap1sec3subs1eq7}
	\end{align}
	$\forall \ n \in \N$.
     
        \begin{table}
	    \begin{center}
		\begin{tabular}{c|c|c}
		\hline
			$x$ & $A$ & $A^* \circ \theta^{-1}$ \\
			\hline
			$0^\infty$ & $a$ & $a$ \\
			\hline
			$0^{n+1}1z$ & $a_{n+1}  $ & $a_{n +1} + (d_{ n + 1} - d_{n})$ \\
			\hline
			$0 1^n 0 z$ & $b_n$ & $ b_ \alpha + (d_1 - c_{\alpha + 1}) + \sum_{j=2}^n (c_j - c_{j+\alpha}) + (b_n - b_{n+\alpha}) $ \\
			\hline
			$01^\infty$ & $b$ & $b_\alpha + (d_1 - c) + \sum_{j=2}^\infty ( c_j - c)$ \\
			\hline
			$1^\infty$ & $c$ & $c$ \\
			\hline
			$1^{n+1}0z$ & $c_{n+1}$ & $c_{\alpha + n + 1} + (b_{\alpha + n + 1} - b_{\alpha + n})$ \\
			\hline
			$10^n1z$ & $d_n$ & $c_{\alpha + 1} + (b_{\alpha + 1} - b_\alpha)$ \\
			\hline
			$10^\infty$ & $d$ & $ c_{\alpha + 1} + (b_{\alpha + 1} - b_\alpha)$ \\
			\hline
		\end{tabular}
		\caption{Dual Potential associated to the Involution Kernel based at $x' = 1^\alpha 0 w$}
		\label{dualpotentialtable3}
	\end{center}
	\end{table}
	
	\begin{align}
	\begin{cases}
	c_{\alpha + n +1} - c_{n+1} = b_{\alpha + n} - b_{\alpha + n +1} \\
	d_n = c_{\alpha + 1} + (b_{\alpha + 1} - b_\alpha) = d \\
	b_{\alpha + n} = b_\alpha + (d_1 - c_{\alpha+1}) + \sum_{j=2}^n (c_j - c_{\alpha + j}) \\
	b = b_\alpha + d - c_{\alpha + 1} + \sum_{j=2}^\infty (c_j - c_{\alpha+j})
	\end{cases} \ ,
	\label{chap1sec3subs1eq8}
	\end{align}
	$\forall \ n \in \N$.

	\begin{table}
	\begin{center}
		\begin{tabular}{c|c|c}
		\hline
			$x$ & $A$ & $A^* \circ \theta^{-1}$ \\
			\hline
			$0^\infty$ & $a$ & $a$ \\
			\hline
			$0^{n+1}1z$ & $a_{n+1}  $ & $a_{n +1} + (d_{ n + 1} - d_{n})$ \\
			\hline
			$0 1^n 0 z$ & $b_n$ & $ b + (d_1 - c) + \sum_{j=2}^n (c_j - c) + (b_n - b) $ \\
			\hline
			$01^\infty$ & $b$ & $b + (d_1 - c) + \sum_{j=2}^\infty ( c_j - c)$ \\
			\hline
			$1^\infty$ & $c$ & $c$ \\
			\hline
			$1^{n+1}0z$ & $c_{n+1}$ & $c $ \\
			\hline
			$10^n1z$ & $d_k$ & $c$ \\
			\hline
			$10^\infty$ & $d$ & $c$ \\
			\hline
		\end{tabular}
		\caption{Dual Potential associated to the Involution Kernel based at $x' = 1^\infty$}
		\label{dualpotentialtable4}
	\end{center}
	\end{table}
	In analogy with the first case, the conditions for symmetry here are $c_{n+1} = d_n = c \ \forall \ n \in \N$.
	
	The following Theorem summarizes our conclusions:
	
	\begin{theorem}  \label{puxa1}
		Suppose that $A \in R(\Omega)$ admits involution kernel. If $b_n = a_{n+1} = a$ for all $n \in \N$, then $A$ is symmetric relatively to the involution kernel based at $x' = 0^\infty$. Let $\alpha \in \N$. If
		\begin{align*}
		\begin{cases}
		a_{\alpha + n +1} - a_{n+1} = d_{\alpha + n} - d_{\alpha + n +1} \\
		b_n = a_{\alpha + 1} + (d_{\alpha + 1} - d_\alpha) = b \\
		d_{\alpha + n} = d_\alpha + (b_1 - a_{\alpha+1}) + \sum_{j=2}^n (a_j - a_{\alpha + j}) \\
		d = d_\alpha + b - a_{\alpha + 1} + \sum_{j=2}^\infty (a_j - a_{\alpha+j})
		\end{cases} \ 
		\end{align*}
		holds $\forall \ n \in \N$, then $A$ is symmetric relatively to $x' = 0^\alpha 1 z$. If $d_n = c_{n+1} = c$ for all natural numbers $n$ then $A$ is symmetric relatively to the point $x' = 1^\infty$. If 
		\begin{align*}
		\begin{cases}
		c_{\alpha + n +1} - c_{n+1} = b_{\alpha + n} - b_{\alpha + n +1} \\
		d_n = c_{\alpha + 1} + (b_{\alpha + 1} - b_\alpha) = d \\
		b_{\alpha + n} = b_\alpha + (d_1 - c_{\alpha+1}) + \sum_{j=2}^n (c_j - c_{\alpha + j}) \\
		b = b_\alpha + d - c_{\alpha + 1} + \sum_{j=2}^\infty (c_j - c_{\alpha+j})
		\end{cases} 
		\end{align*}
		holds for all $n \in \N$ then $A$ is symmetric relatively to the point $x' = 1^\alpha 0z$.
		\label{teo3}
	\end{theorem}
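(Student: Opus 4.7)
The plan is to prove the theorem by direct case analysis on the four choices of base point $x' \in \{0^\infty,\, 0^\alpha 1 z,\, 1^\infty,\, 1^\alpha 0 z\}$, since by Definition \ref{chap1sec3subsec1defi1} it suffices to determine, for each choice, the conditions on the sequences $(a_n),(b_n),(c_n),(d_n)$ that force the identity $A^* \circ \theta^{-1} = A$. The key observation is that since $A \in R(\Omega)$ depends on $x$ only through the patterns $0^\infty,\, 0^{n+1}1z,\, 01^n0z,\, 01^\infty,\, 1^\infty,\, 1^{n+1}0z,\, 10^n1z,\, 10^\infty$, the identity $A^* \circ \theta^{-1} = A$ reduces to eight equalities, one for each row of the associated table in Subsection \ref{kyu1}. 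All the hard work is already contained in Tables \ref{dualpotentialtable1}--\ref{dualpotentialtable4}, which give closed-form expressions for both $A$ and $A^* \circ \theta^{-1}$ on each type of sequence.

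For $x' = 0^\infty$ the proof is immediate: reading Table \ref{dualpotentialtable1} line by line, the rows for $0^{n+1}1z$, $01^n0z$, and $01^\infty$ force $a_{n+1} = a$, $b_n = a$, and $b = a$ respectively, while the rows for $1^\infty$ and $1^{n+1}0z$ are identities (involving only $c$'s). Once $a_{n+1} = b_n = b = a$ holds, the sums $\sum_{j=2}^\infty (a_j - a)$ vanish and the factor $b_1 - a$ is zero, so the rows for $10^n1z$ and $10^\infty$ collapse to $d_n = d_n$ and $d = d$. The case $x' = 1^\infty$ is handled in the same way using Table \ref{dualpotentialtable4}, and indeed follows from the first case by the involution $0 \leftrightarrow 1$, $a \leftrightarrow c$, $b \leftrightarrow d$.

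For $x' = 0^\alpha 1 z$ the strategy is identical but the algebra is heavier. Equating columns two and three of Table \ref{dualpotentialtable2} row by row produces: from row 2, the identity $a_{n+1} = a_{\alpha+n+1} + (d_{\alpha+n+1} - d_{\alpha+n})$; from rows 3 and 4, the equality $b_n = b = a_{\alpha+1} + (d_{\alpha+1} - d_\alpha)$; from row 7, the identity $d_n = d_\alpha + (b_1 - a_{\alpha+1}) + \sum_{j=2}^n (a_j - a_{\alpha+j}) + (d_n - d_{\alpha+n})$, which rearranges to the third equation of \eqref{chap1sec3subs1eq7}; and from row 8, the limit identity for $d$ obtained by letting $n \to \infty$, using the convergence granted by Theorem \ref{okl}. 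Row 6 must then be shown to be automatically satisfied, which reduces to the identity $b_{n+1} - b_n = 0$ that follows from $b_n = b$ already forced above. The case $x' = 1^\alpha 0 z$ is handled by the same procedure applied to Table \ref{dualpotentialtable3}, and yields system \eqref{chap1sec3subs1eq8}.

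The main bookkeeping obstacle is verifying that no row in the table is forgotten and that the non-trivial rows indeed collapse to the four listed equations rather than to a larger, over-determined system. Concretely, one must check that the seemingly extra constraint coming from row 6 in the $0^\alpha 1 z$ case (and symmetrically row 2 in the $1^\alpha 0 z$ case) is a consequence of the listed system, and that the infinite-sum row (row 8) is obtained from the finite-sum row (row 7) by passing to the limit, which is legitimate under the convergence hypothesis supplied by the existence of the involution kernel. Once this is confirmed, the theorem is nothing more than a transcription of the tables.
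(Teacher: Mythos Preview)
Your proposal is correct and follows exactly the paper's approach: the theorem is presented there as a summary of the conditions obtained by equating the second and third columns of Tables~\ref{dualpotentialtable1}--\ref{dualpotentialtable4}, and your row-by-row analysis reproduces precisely that derivation, including the check that the row for $1^{n+1}0z$ (which contributes $b_{n+1}-b_n=0$) is a consequence of the already-listed condition $b_n=b$. The only minor slip is the parenthetical remark that the $1^{n+1}0z$ row ``involves only $c$'s'' in the $x'=0^\infty$ case---it involves $b_{n+1}-b_n$ as well---but your subsequent argument handles this correctly anyway.
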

	
	The above Theorem allow us to show that there exists potentials in $R(\Omega)$ for which there is no involution kernel, {\bf accordingly to Definition} \ref{chap1defi2}, that makes it symmetric. In fact, take four distinct real number $a,b,c$ and $d$, and define $A$ with the constant sequences $a_n = a$, $b_n = b$, $c_n = c$ and $d_n = d$. By Theorem \ref{teo2}, $A$ has involution kernel based at any point $x' \in \Omega$. For any $n,\alpha \in \N$,
	\begin{align*}
	\begin{cases}
	b_n = b \neq a = a_n \implies \text{$A$ is not symmetric relatively to $0^\infty$}\\
	b_n \neq a_{\alpha+1} = a_{\alpha+1} + (d_{\alpha+1} - d_\alpha) \implies \text{$A$ is not symmetric relatively to $0^{\alpha}1z$}\\
	d_n = d \neq c = c_{n+1} \implies \text{$A$ is not symmetric relatively to $1^\infty$}\\
	d_n \neq c_{\alpha + 1} = c_{\alpha + 1} + (b_{\alpha + 1} - b_\alpha) \implies \text{$A$ is not symmetric relatively to $1^\alpha 0z$}
	\end{cases}
	\end{align*}
	so $A$ cannot be symmetric relatively to some point.
	
	\begin{remark}
		The results in this paper are restricted to a specific class of involution kernels; those which comes from a limit as in \eqref{chap1eq2}. Thus, our results have some restrictions, which we illustrate with an example. It comes from a simple observation: If $A_1$ and $A_2$ are potentials, $W_1$ is an involution kernel for $A_1$ and $W_2$ is an involution kernel for $A_2$, then $W_1 + W_2$ is an involution kernel for $A_1 + A_2$. Also, if $k \in \R \setminus \{ 0 \}$, $kW_1$ is an involution kernel for $k A_1$.
		
		Suppose that $A: \Omega \to \R$ is a potential in $R(\Omega) $ depending only on the two first coordinates. Given a point $x' = (x'_n)_{n \in \N} \in \Omega$,
		$$
		\sum_{n \in \N} [ \hat{A} \circ \hat{\sigma}^{-n}(y|x) - \hat{A} \circ \hat{\sigma}^{-n}(y|x') ] =$$
		$$ A(y_1 x_1...) - A(y_1x'_1...) + A(y_2y_1x_1...) - A(y_2y_1x'_1...) =$$
	$$	A(y_1x_1...) - A(y_1x'_1), $$
		and this implies that
		\begin{align*}
		\hat{A}^*(y|x) = A(y_2y_1x) + \left[ A(y_1x') - A(y_2x') \right].
		\end{align*}
		Suppose now that $A$ is a indicator function $\chi_{[a_0,a_1)}$ of a cylinder $[a_0a_1)$. The above expression shows that, taking $x' \notin [a_1)$, 
		\begin{align*}
		\hat{\chi}^*_{[a_0a_1)} (y|x) = \chi_{[a_0a_1)} (y_2,y_1 x) = \chi_{(a_1a_0]}(y) \ ,
		\end{align*}
		i.e., it is possible to choose $x'$ in such a way that $\chi^*_{[a_0a_1)} = \chi_{(a_1a_0]}$. 
		
		Given a $2\times 2$ line stochastic matrix $\P = [p_{ij}]$, the associated stationary Markovian measure over $\Omega$ can be obtained via Thermodynamic Formalism by considering the potential
		$A = p_{11} \chi_{[00)} + p_{12} \chi_{[01)} + p_{21} \chi_{[10)} + p_{22} \chi_{[11)}$. The potential $A$ thus defined is in $R(\Omega)$. For each $\chi_{[a_0a_1)}$ we can choose $x'(a_0a_1) \in \Omega$ such that $\chi^*_{[a_0a_1)}$, the dual with respect to the involution kernel based at $x'(a_0,a_1)$, is $\chi_{(a_1a_0]}$. Considering the observation made in the beginning of the remark, we conclude that
		\begin{align*}
		B := p_{11} \chi_{(00]} + p_{12} \chi_{(10]} + p_{21} \chi_{(01]} + p_{22} \chi_{(11]} 
		\end{align*}
		is a dual potential for $A$ (see Section 5 in \cite{BLLCo}). If $\P$ is row-stochastic, we have an associate markov process in $\Omega^-$ defined by the column-stochastic matrix $\P^T$.
		
		Besides $B$ is a dual potential for $A$, it may not be obtained by an involution kernel of the form \eqref{chap1eq2}: for each indicator function $\chi_{[a_0a_1)}$ we choose a $x'(a_0a_1)$ to obtain the kernel, and $B$ is obtained through the linear combination of these kernels. The sum of two kernels of type \eqref{chap1eq2} may not be of the same type unless the respective base points are the same. Thus, our results do not apply to $B$ as it does not apply, in general, to dual potential obtained by linear combinations of involution kernels of type \eqref{chap1eq2}.
		
		The important result which is announced in Corollary \ref{normpotentialscor2} has also the same restriction
	    \end{remark}

	\section{Twist Condition} \label{twi}
	
	In the works  \cite{LOT} and \cite{LOS}, the involution kernel appears related to an   Ergodic Transport Problem. The involution kernel can be seen as a natural cost function on $\hat{\Omega}$. In the symbolic setting, the Twist Condition plays the role of convexity. 
	\begin{definition}
	Considering the lexicographic order $<$ in $\Omega$ and $\Omega^-$, the involution kernel $W$ is said to satisfy the Twist Condition if and only if $y < y'$ and $x < x'$ implies
	\begin{align*}
	W(y|x) + W(y'|x') < W(y|x') + W(y'|x') \ .
	\end{align*}
	\end{definition}
	
	For the involution kernels in this paper, the twist condition cannot be satisfied, as we show now.
	Let $a \in [1^s0], \ a' \in [0^{s+p}1],\ b \in [0^{k+q}1]$ and $b' \in [0^k1]$. Then, if the involution kernel based at $x' = 0^\alpha 1 z$ satisfies the twist condition, 
	$$
	W(...01^s | 0^{k+q}1...) + W(...01^{s+p}|0^k1...) < $$
	$$W (...01^s|0^k1...) + W(...01^{s+p}|0^{k+q}1...) $$
	
	This is equivalent to
	\begin{align}
	( d_{k+q} - d_\alpha) + (d_k - d_\alpha) < (d_k - d_\alpha) + (d_{k+q} - d_\alpha) \ .
	\label{twisteq1}
	\end{align}
	
	If we change the base point of the involution kernel to be $x' = 0^\infty$, the above inequality changes only by the substitution $d_\alpha \to d$. To consider basis points in the cylinder $[1]$ would lead to a similar contradiction: considering $b \in [1^s0], \ b' \in [1^{s+p}0],\ a \in [0^{k+q}1]$ and $a' \in [0^k1]$ and the involution kernel based at $x' = 1^\alpha 0 z$, it holds
	$$ W(...10^{k+q}|1^s0...) + W (...10^k | 1^{s+p}0...) =$$
	\begin{align*}
	 b_{k+q} - b_\alpha = W(...10^{k+q}|0^{s+p}1...) + W(...10^k |1^s0...).
	\end{align*}
	Therefore, the  twist condition cannot be satisfied.
	
	\begin{definition} \label{kil}
		A cost function $c: \hat{\Omega} \to \R$ is said to satisfy a relaxed twist condition if, for any pair $(a,b),(a'b') \in \hat{\Omega}$ with $a < a'$ and $b < b'$, 
		\begin{align*}
		c(a,b) + c(a',b') \leq c(a,b') + c(a',b) \ .
		\end{align*}
	\end{definition}
    
    We can define a class of Walters Potentials for which the involution kernel satisfies the relaxed twist condition. Indeed, let $W$ be the involution kernel based at the point $0^\alpha1z$ of a Walters potential $A$ defined by the sequences $(a_n)_{n \in \N}$,$(b_n)_{n \in \N}$,$(c_n)_{n \in \N}$ and $(d_n)_{n \in \N}$. 
    
   Define, for each $n \in \N$, the sequences $\left( \Delta^n_i \right)_{i \in \N}$ and $\left( \Gamma^n_i \right)_{i \in \N}$ via $\Delta^n_{i} := d_{n+i} - d_i$, $\Gamma^m_i := b_{n+i} - b_i$.

    \begin{theorem} \label{kjg}
    	If $(a_n)_{n \in \N},\ (c_n)_{n \in \N}$ and, for each $m \in \N$, $\left( \Delta^m_n\right)_{n \in \N}$ and $\left( \Gamma^m_n \right)_{n \in \N}$ are decreasing, and if $(d_n)_{n \in \N}$ and $(b_n)_{n \in \N}$ are sub-additive, the involution kernel based at $x$ satisfies the relaxed twist condition, for any $x \in \Omega$.
    \end{theorem}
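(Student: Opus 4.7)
The plan is to establish the relaxed twist inequality by a direct computation from the explicit formulas collected in Subsection \ref{kyu1}. The starting observation is that the twist gap
\begin{equation*}
D(y,y';x,x') := W(y|x') + W(y'|x) - W(y|x) - W(y'|x')
\end{equation*}
is independent of the base point $x^{*}$ used to construct $W$. Writing $W(\cdot|\cdot) = \sum_{n\geq 1}[A\circ\hat\sigma^{-n}(\cdot|\cdot) - A\circ\hat\sigma^{-n}(\cdot|x^{*})]$ and plugging into the alternating combination above, the four contributions involving $x^{*}$ cancel in pairs, so $D$ depends only on $A, y, y', x, x'$. Accordingly, it is enough to fix $x^{*} = 0^{\alpha}1w$ and work with the formulas of Table \ref{dualpotentialtable2}; this simultaneously settles the claim ``for any $x \in \Omega$''.

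Next I would reduce to the non-degenerate case by continuity of $W$ on $\hat\Omega$. It suffices to prove $D \geq 0$ assuming that $y, y'$ and $x, x'$ each lie in a cylinder determined by a finite initial run, i.e.\ $y \in \{10^{l},\,01^{l} : l \geq 1\}$ (backward notation as in Section \ref{IIn}) and $x \in \{[0^{k}1],\,[1^{k}0] : k \geq 1\}$, the limiting points $0^{\infty}, 1^{\infty}$ being recovered by sending $l$ or $k$ to infinity. I would then enumerate configurations according to which cylinders in $\Omega^{-}$ contain $y, y'$ and which cylinders in $\Omega^{+}$ contain $x, x'$. The constraint $y < y'$ forces either (i) both to lie in the same half of $\Omega^{-}$ with run-lengths ordered appropriately, or (ii) $y \in [0]$ and $y' \in [1]$; the analogous dichotomy applies to $(x,x')$. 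This produces a short finite list of configurations.

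For each configuration I would read off the four relevant values of $W$ from Table \ref{dualpotentialtable2} and simplify $D$ algebraically. The simplified expression decomposes into a sum of elementary terms of three kinds, each nonnegative under exactly one of the hypotheses: differences $a_{j+k'} - a_{j+k}$ and $c_{j+k'} - c_{j+k}$ with $k < k'$, nonnegative by monotonicity of $(a_n)$ and $(c_n)$; differences $\Delta^{m}_{n_{1}} - \Delta^{m}_{n_{2}}$ and $\Gamma^{m}_{n_{1}} - \Gamma^{m}_{n_{2}}$ with $n_{1} < n_{2}$, nonnegative because $(\Delta^{m}_{n})_{n}$ and $(\Gamma^{m}_{n})_{n}$ are decreasing; and symmetric combinations $b_{n} + b_{m} - b_{n+m}$ and $d_{n} + d_{m} - d_{n+m}$, nonnegative by the sub-additivity of $(b_{n})$ and $(d_{n})$. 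The inequality $D \geq 0$ then follows term by term.

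The hardest step will be the fully mixed configuration $y \in [0], y' \in [1], x \in [0], x' \in [1]$, in which the four evaluations $W(y|x), W(y|x'), W(y'|x), W(y'|x')$ come from four different rows of Table \ref{dualpotentialtable2}. After substitution, $D$ contains a large number of $a_{\cdot}, b_{\cdot}, c_{\cdot}, d_{\cdot}$ contributions that do not cancel trivially and must be regrouped before the three inequality types can all be invoked at once; this is the configuration where the sub-additivity hypotheses on $(b_{n})$ and $(d_{n})$ become indispensable, via the diagonal contributions $b_{k'} + b_{l'} - b_{k'+l'}$ and $d_{k} + d_{l} - d_{k+l}$ that emerge precisely here.
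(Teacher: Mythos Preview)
The paper does not actually supply a proof of Theorem \ref{kjg}; the statement closes Section \ref{twi} with no argument following it (the proof is presumably deferred to the dissertation \cite{dissH}), so there is no in-paper proof to compare your approach against. A minor point first: the formulas you need are the values of $W$ from Section \ref{IIn} (equations \eqref{chap1sec1subsec1eeq1}--\eqref{chap1sec1subsec1eeq11}), not Subsection \ref{kyu1} or Table \ref{dualpotentialtable2}, which record $A^{*}$ rather than $W$.

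More seriously, your sketch has a genuine gap in precisely the configuration you flag as hardest, and in fact the theorem as literally stated appears to be false. With $y\in[0^{l}1]$, $y'\in[1^{l'}0]$, $x\in[0^{k}1]$, $x'\in[1^{k'}0]$, the twist gap computed from \eqref{chap1sec1subsec1eeq4}, \eqref{chap1sec1subsec1eeq6}, \eqref{chap1sec1subsec1eeq9}, \eqref{chap1sec1subsec1eeq11} is
\[
D \;=\; (b_{k'}+b_{l'}-b_{l'+k'}) + (d_{l}+d_{k}-d_{l+k}) + \sum_{n=2}^{l}(a_{n}-a_{n+k}) + \sum_{n=2}^{l'}(c_{n}-c_{n+k'}) \;-\; a_{k+1} \;-\; c_{k'+1}\, .
\]
The dangling terms $-a_{k+1}$ and $-c_{k'+1}$ do not fall into any of your three categories; they come from the $n=1$ contributions $A(y_{1}x')-A(y_{1}x)=b_{k'}-a_{k+1}$ and $A(y'_{1}x)-A(y'_{1}x')=d_{k}-c_{k'+1}$, and nothing in the stated hypotheses bounds them. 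Concretely, take the constant sequences $a_{n}=c_{n}\equiv 1$, $b_{n}=d_{n}\equiv 0$; every hypothesis is satisfied (weakly decreasing, sub-additive, $\Delta^{m}_{n}=\Gamma^{m}_{n}\equiv 0$), the potential is the two-block function $A=\chi_{[00]}+\chi_{[11]}$, and one computes directly $W(y|x)=0,-1,0,1$ according as $(y_{1},x_{1})=(0,0),(0,1),(1,0),(1,1)$. For $y_{1}=x_{1}=0$, $y'_{1}=x'_{1}=1$ this gives $W(y|x)+W(y'|x')=1>-1=W(y|x')+W(y'|x)$, violating the relaxed twist inequality of Definition \ref{kil}. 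So the regrouping you describe cannot succeed without either additional hypotheses (for instance a sign condition forcing $a_{n},c_{n}\leq 0$) or a different ordering convention than the one both of us are reading from the text.
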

    
	\section{Normalized Potentials on $G(\Omega)$} \label{nono}
	
	Suppose $A$ is normalized. If $A^*$ is the dual potential associated to the point $0^\alpha 1 z$, then for it to be normalized, accordingly to Table \ref{dualpotentialtable2}, the following equations must be satisfied:
	$$ \exp \left[ d_\alpha + b_1 - a_{\alpha +1} + \sum_{j=2}^{n} (a_j - a_{\alpha + j}) + d_n - d_{n+\alpha} \right] =$$
	\begin{align}
	 1 - \exp \left( a_{\alpha + n + 1} + d_{\alpha + n + 1} - d_{\alpha + n} \right) \ ,
	\label{normpotentialeq1}
	\end{align}
	\begin{align}
		\exp \left( a_{\alpha+1} + d_{\alpha+1} - d_\alpha \right) = 1 - \exp \left( c_{n+1} + b_{n+1} - b_n \right) \ .
		\label{normpotentialeq2}
	\end{align}
	
	The normalization condition  of $A$ reduces its degree of freedom; the sequences $(b_n)_{n \in \N}$ and $(d_n)_{n \in \N}$ becomes entirely defined by the sequences $(a_{n+1})_{n \in \N}$ and $(c_{n+1})_{n \in \N}$. We show that normalization of $A^*$ defines $(c_{n+1})_{n \in \N}$ in terms of $(a_{n+1})_{n \in \N}$.
	
	\begin{proposition}
		If $A$ and $A^*$ are normalized, then
		\begin{align*}
		e^{c_{n+2}} = 1 - \left(1- e^{a_{\alpha+1} + d_{\alpha + 1} - d_\alpha} \right) \. \left( e^{-c_{n+1}} - 1 \right) \ .
		\end{align*}
		\label{normpotentialsprop1}
	\end{proposition}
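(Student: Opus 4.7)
The plan is to combine the two normalization conditions and reduce the statement to a one-line algebraic manipulation. The key observation is that the normalization of $A$ already determines $(b_n)_{n \in \N}$ in terms of $(c_{n+1})_{n \in \N}$, so feeding this into the normalization of $A^*$ produces a recurrence relating $c_{n+1}$ and $c_{n+2}$.

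First, I would extract from the normalization of $A$ the identity $e^{b_n} + e^{c_{n+1}} = 1$, obtained by evaluating the $g$-function condition $g(0x) + g(1x) = 1$ at $x = 1^n 0 z$. Hence $e^{b_n} = 1 - e^{c_{n+1}}$ for every $n$, and therefore
\begin{equation*}
e^{b_{n+1} - b_n} = \frac{1 - e^{c_{n+2}}}{1 - e^{c_{n+1}}}.
\end{equation*}

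Next, I would invoke \eqref{normpotentialeq2}, which encodes the normalization of $A^*$ at the pair $(b^*_n, c^*_{n+1})$ read off from Table \ref{dualpotentialtable2}. Setting $K := \exp(a_{\alpha+1} + d_{\alpha+1} - d_\alpha)$, that equation reads $K = 1 - e^{c_{n+1} + b_{n+1} - b_n}$. Substituting the expression for $e^{b_{n+1} - b_n}$ obtained above yields
\begin{equation*}
e^{c_{n+1}} \cdot \frac{1 - e^{c_{n+2}}}{1 - e^{c_{n+1}}} = 1 - K,
\end{equation*}
and solving algebraically for $e^{c_{n+2}}$ produces exactly
\begin{equation*}
e^{c_{n+2}} = 1 - (1 - K)\bigl(e^{-c_{n+1}} - 1\bigr),
\end{equation*}
which is the claimed identity.

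The only point requiring care is bookkeeping: confirming that the $b^*$ and $c^*$ entries of Table \ref{dualpotentialtable2} really do satisfy the $g$-function condition $e^{b^*_n} + e^{c^*_{n+1}} = 1$, and that this condition rewrites precisely as \eqref{normpotentialeq2} with the $n$-independent left-hand side $K$ (the $n$-independence of $K$ is itself a compatibility consequence of the normalization, and is consistent with the $n$-independence of $a_{\alpha+1} + d_{\alpha+1} - d_\alpha$). Once that verification is in place, the rest is elementary algebra, and I do not foresee any genuine obstacle.
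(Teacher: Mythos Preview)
Your proposal is correct and follows exactly the route the paper sets up: the paper states equations \eqref{normpotentialeq2} and \eqref{normpotentialeq6} immediately before the proposition and leaves the proof implicit, and you combine precisely these two ingredients---$e^{b_n}=1-e^{c_{n+1}}$ from the normalization of $A$, and $e^{a_{\alpha+1}+d_{\alpha+1}-d_\alpha}=1-e^{c_{n+1}+b_{n+1}-b_n}$ from the normalization of $A^*$---to obtain the recurrence by a short algebraic rearrangement. There is nothing to add; your bookkeeping remark about verifying that \eqref{normpotentialeq2} really is the $g$-function condition on the $(b^*_n,c^*_{n+1})$ entries of Table~\ref{dualpotentialtable2} is exactly the point the paper is taking for granted.
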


	\begin{proposition}
		If $A$ and $A^*$ are normalized, then
		\begin{align*}
		1-e^{c_2} = \frac{1 - e^{a_{\alpha + n +1} + a_{\alpha + n +2}}}{\left(e^{-a_{\alpha +1}} - 1 \right) \left( 1 - e^{a_{n+1}} \right) \exp \left[ \sum_{j=2}^n (a_j - a_{\alpha+j}) \right]} \ ,\ \forall n \in \N \ .
		\end{align*}
	\label{normpotentialsprop2}
	\end{proposition}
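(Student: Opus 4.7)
The plan is to solve the normalization equation \eqref{normpotentialeq1} explicitly for $1 - e^{c_2}$, by using the hypothesis that $A$ is normalized to eliminate the sequences $(b_n)_{n \in \N}$ and $(d_n)_{n \in \N}$ in favor of $(a_n)_{n \in \N}$ and $(c_n)_{n \in \N}$. First, from the $g$-function identity $e^{A(0x)} + e^{A(1x)} = 1$ applied at $x \in [0^n 1 z]$ and $x \in [1^n 0 z]$, and reading off the values of $A$ from Definition \ref{defi1}, I extract the pair of identities $e^{d_n} = 1 - e^{a_{n+1}}$ and $e^{b_n} = 1 - e^{c_{n+1}}$, valid for every $n \in \N$. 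In particular $e^{b_1} = 1 - e^{c_2}$, which is how the quantity $1 - e^{c_2}$ will enter the computation.

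Next I would exponentiate \eqref{normpotentialeq1} and apply these substitutions. On the left-hand side, $e^{b_1}$ becomes $1 - e^{c_2}$; the combination $e^{d_\alpha}\,e^{-a_{\alpha+1}}$ collapses via $(1 - e^{a_{\alpha+1}})e^{-a_{\alpha+1}} = e^{-a_{\alpha+1}} - 1$ to the first factor of the stated denominator; $e^{d_n}$ becomes $1 - e^{a_{n+1}}$, giving the second factor; and $\exp[\sum_{j=2}^n (a_j - a_{\alpha+j})]$ appears verbatim. The only residual factor is $e^{-d_{n+\alpha}} = (1 - e^{a_{\alpha+n+1}})^{-1}$, which I would cross-multiply to the right-hand side. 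On that right-hand side, the expression $1 - \exp(a_{\alpha+n+1} + d_{\alpha+n+1} - d_{\alpha+n})$ becomes, after the same substitution, a rational function of $e^{a_{\alpha+n+1}}$ and $e^{a_{\alpha+n+2}}$ with common denominator $1 - e^{a_{\alpha+n+1}}$; this denominator cancels against the residual factor from the left-hand side, and isolating $1 - e^{c_2}$ yields the stated closed form.

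The main obstacle is purely algebraic bookkeeping of these substitutions and of the precise cancellation $a_{n+\alpha+1} = a_{\alpha+n+1}$ between the residual LHS denominator and the common denominator produced on the RHS. As a consistency check, the left-hand side $1 - e^{c_2}$ is independent of $n$, so the derived formula must be $n$-independent; this is guaranteed by the recursion established in Proposition \ref{normpotentialsprop1} (equivalently by \eqref{normpotentialeq2}), and offers a useful cross-verification of the derivation. Beyond this, the proof requires no further dynamical input: it is a direct consequence of the two normalization constraints combined with Definition \ref{defi1}.
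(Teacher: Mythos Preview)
Your proposal is correct and matches the paper's approach. The paper does not spell out a proof of Proposition~\ref{normpotentialsprop2}, but it sets up exactly the ingredients you use: the normalization identity \eqref{normpotentialeq1} for $A^*$ coming from Table~\ref{dualpotentialtable2}, and the system \eqref{normpotentialeq6} expressing $d_n,b_n$ in terms of $a_{n+1},c_{n+1}$ from the normalization of $A$; your plan of exponentiating \eqref{normpotentialeq1}, substituting $e^{d_m}=1-e^{a_{m+1}}$ and $e^{b_1}=1-e^{c_2}$, and cancelling the common factor $1-e^{a_{\alpha+n+1}}$ between the two sides is precisely the computation the paper's setup is pointing at.
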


\begin{corollary}
	If $A$ and $A^*$ are normalized, then the sequence
	\begin{align*}
	\frac{1 - e^{a_{\alpha + n+1}+a_{\alpha+n+2}}}{\left(e^{-a_{\alpha +1}} - 1 \right) \left( 1 - e^{a_{n+1}} \right) \exp \left[ \sum_{j=2}^n (a_j - a_{\alpha+j}) \right]}
	\end{align*}
	is constant.
	\label{normpotentialscor1}
\end{corollary}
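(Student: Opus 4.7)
The plan is very short, since Corollary \ref{normpotentialscor1} is essentially an immediate consequence of Proposition \ref{normpotentialsprop2} which precedes it. That proposition asserts, under the hypothesis that both $A$ and $A^*$ are normalized, the identity
\begin{equation*}
1 - e^{c_2} \;=\; \frac{1 - e^{a_{\alpha + n +1} + a_{\alpha + n +2}}}{\left(e^{-a_{\alpha +1}} - 1 \right) \left( 1 - e^{a_{n+1}} \right) \exp \left[ \sum_{j=2}^n (a_j - a_{\alpha+j}) \right]},
\end{equation*}
valid for every $n \in \mathbb{N}$. The left-hand side manifestly does not depend on $n$; it depends only on the single value $c_2$, which is a fixed real number determined by the potential $A$. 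Consequently, the right-hand side, viewed as a sequence indexed by $n$, takes the same value for all $n$, i.e.\ it is constant.

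So my proof would be one line: apply Proposition \ref{normpotentialsprop2}, and observe that the right-hand side of the displayed identity is precisely the expression in the corollary's statement, while the left-hand side $1-e^{c_2}$ is independent of $n$; therefore the right-hand side is constant in $n$. No calculation is required, no nontrivial estimate enters, and there is no obstacle to overcome — the content of the corollary is just a rephrasing of Proposition \ref{normpotentialsprop2} in which one moves the emphasis from ``this quotient equals the constant $1-e^{c_2}$'' to ``this quotient is constant in $n$.''

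If one wanted to make the statement more informative, one could add the explicit identification of the constant value, namely $1 - e^{c_2}$, which is already produced as a by-product of Proposition \ref{normpotentialsprop2}. But strictly as written, the corollary asks only for constancy, and this follows at once.
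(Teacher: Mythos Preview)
Your proposal is correct and matches the paper's treatment: the corollary is stated immediately after Proposition~\ref{normpotentialsprop2} with no separate proof, since it follows at once from that proposition by noting the left-hand side $1-e^{c_2}$ is independent of $n$. Your one-line argument is exactly what the paper implicitly intends.
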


	Then we can reach the following conclusion: there are normalized potentials $A \in R(\Omega)$ for which there is no dual normalized potential. In fact, $A$ is normalized if and only if the system
	\begin{align}
	\begin{cases}
	d_n = \log (1 - e^{a_{n+1}}) \\
	b_n = \log (1 - e^{c_{n+1}})
	\end{cases} \ \forall \ n \in \N \ ,
	\label{normpotentialeq6}
	\end{align}
	holds and there exists $r \in (0,1)$ such that $e^{a_{n+1}} \in (c,1)$ and $e^{c_{n+1}} \in (0,1-c)$ for all $n \in \N$
	
	For any choice of sequences $(a_{n+1})_{n \in \N}$ and $(c_{n+1})_{n \in \N}$ satisfying theses respective constraints, with 
	\begin{align*}
	\sum_{n \in \N} (a - a_n) \ \text{and} \ \sum_{n \in \N} (c - c_n)
	\end{align*}
	convergent, defining $b_n$ and $d_n$ by the above equations, we get a normalized potential, which may not satisfy the conclusion of Proposition \ref{normpotentialsprop1}. 
	
	\begin{example}
		Let $a_n = d_n = \log 1/2$ for all $n \in \N$. Let $c_n =\log 2/3$ and $b_n = \log 1/3$, for all $n \in \N$.Since $1/2 > 1/3$, $1/3 < 2/3$, and the equations \eqref{normpotentialeq6} are satisfied, the potential derived from these sequences is normalized. We show that Proposition \ref{normpotentialsprop1} does not hold. 
		\begin{align*}
		1 - \left(1- e^{a_{\alpha+1} + d_{\alpha + 1} - d_\alpha} \right) \. \left( e^{-c_{n+1}} - 1 \right) &= 1 - \left( 1 - \frac{1}{2} \right) \left( \frac{3}{2} - 1 \right) \\
		&= 1 - \frac{1}{4} \\
		&= \frac{3}{4} \\
		&\neq \frac{2}{3} \\
		&= e^c \ .
		\end{align*}
	\end{example}

	On the other hand, if $A$ satisfies the Ruelle's Operator Theorem, its dual also satisfies it.  
	
	Let $(\lambda, h)$ be a positive eigenpair for the Ruelle Operator associated with $A$, and let ($\lambda^*,h^*$) be a positive eigenpair for the Ruelle Operator associated with $A^*$. Moreover, take $\lambda$ and $\lambda*$ as the greatest eigenvalue of $\L_A$ and $\L_{A^*}$, respectively. We know that $\lambda = \lambda^*$. 
	
	Given a Holder potential $A$ and its equilibrium probability $\mu_A$,
	there exists a unique positive function $J:\Omega \to (0,1)$,  such that,
	$$ \mathcal{L}^*_{\log J} (\mu_A)= \mu_A.$$
	
	For a proof see \cite{PP}.  We call $J$ the Jacobian of the probability $\mu_A$.

	We study the relation between $h$ and $h^*$ and the Jacobians of the equilibrium states of the potentials $A$ and $A^*$. Fixing $A \in R(\Omega)$ with involution kernel $W$ and denoting $A^*$ the corresponding dual relatively to the involution kernel based at the point $0^\delta 1w$, we prove 
	\begin{theorem}
		Let $\mu$ and $\mu^*$ be the equilibrium states of $A$ and $A^*$, respectively. Then the Jacobians of $J$ of $\mu$ and $J^*$ of $\mu^*$ satisfies $J = J^* \circ \theta^{-1}$.
		\label{normpotentialtheo1}
	\end{theorem}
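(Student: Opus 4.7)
The plan is to reduce the stated equality of Jacobians to an equality of one-sided measures, and then to exploit the reversal symmetry of cylinder measures available for equilibrium states of potentials in the Walters' family.

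First I will unpack the statement. Because $\theta$ swaps the minus and plus sides of $\hat{\Omega}$ while $J$ and $J^*$ each depend on only one side, the identity $J = J^* \circ \theta^{-1}$ on $\hat{\Omega}$ is equivalent to the pointwise equality $J(x) = J^*(\bar x)$ for every $x \in \Omega$, where $\bar x = (\ldots, x_2, x_1) \in \Omega^-$ is the reversal of $x$. Under the canonical identification $\iota : \Omega^- \to \Omega^+$ sending $(\ldots, x_2, x_1) \leftrightarrow (x_1, x_2, \ldots)$, this is in turn equivalent, by uniqueness of the Jacobian of a shift-invariant probability, to the equality of measures $\iota_* \mu^* = \mu$ on $\Omega^+$.

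To prove the latter I will use the natural extension $\hat{\mu}$ of $\mu$ on $\hat{\Omega}$. From \eqref{tritri} one reads that $\hat{\mu}$ has density $e^{W(y|x)}$ with respect to $\nu_{A^*} \otimes \nu_A$, is $\hat{\sigma}$-invariant, and projects to $\mu$ on the plus side and to $\mu^*$ on the minus side. Applying $\hat{\sigma}^n$-invariance to the cylinder fixing the coordinates at positions $-1,\dots,-n$ and carefully tracking how $\hat{\sigma}^n$ relabels indices across the forbidden slot $0$ gives, for every word $(x_1, \ldots, x_n)$,
$$ \iota_* \mu^*([x_1, \ldots, x_n]) \;=\; \mu([x_n, x_{n-1}, \ldots, x_1]). $$
I will then invoke Corollary \ref{veve} (our refinement of Walters' Corollary 2.3), applied to the normalized $g$-function in the cohomology class of $A$, which yields the reversal identity $\mu([x_1, \ldots, x_n]) = \mu([x_n, \ldots, x_1])$. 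The two displays combine to give $\iota_* \mu^* = \mu$ on cylinders and hence on all Borel sets, and the theorem follows.

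The main obstacle is the appeal to Walters' reversal symmetry in the generality of the statement: the corollary is stated for $g$-functions in $G(\Omega)$ lying in the Walters' family, so for a general $A \in R(\Omega)$ admitting an involution kernel one must first normalize $A$ via its principal eigenfunction $\varphi_A$ and verify that the resulting $g$-function stays in the Walters family. This in turn reduces to the fact that $\varphi_A$ is itself constant on each of the cylinders $[0^{n+1}1]$, $[01^n0]$, $[1^{n+1}0]$, $[10^n1]$, a consequence of the explicit resolution of $\L_A \varphi_A = \lambda_A \varphi_A$ for potentials in the Walters' family, so that the normalization $\log J = A - \log \lambda_A + \log \varphi_A - \log \varphi_A \circ \sigma$ preserves the Walters structure.
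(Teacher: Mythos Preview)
The paper does not include a proof of Theorem~\ref{normpotentialtheo1}: it is stated and immediately followed by Corollary~\ref{normpotentialscor2}, whose proof \emph{uses} the theorem. Given the paper's style --- explicit tables for $A^*$ (Tables~\ref{dualpotentialtable1}--\ref{dualpotentialtable4}) and the closed-form eigenfunctions available from \cite{Wal2} for potentials in $R(\Omega)$ --- the intended argument is presumably a direct term-by-term verification that the normalized potentials $\log J = A - \log\lambda + \log\varphi_A - \log\varphi_A\circ\sigma$ and $\log J^*\circ\theta^{-1}$ agree on each of the Walters cylinders.

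Your route is genuinely different and in fact reverses the paper's logical flow: you first establish the measure identity $\iota_*\mu^* = \mu$ (which is precisely the content of Corollary~\ref{normpotentialscor2}) directly, via the natural extension and the reversal symmetry $\mu([x_1\dots x_n]) = \mu([x_n\dots x_1])$, and then deduce the Jacobian identity from equality of the measures. This is more conceptual and avoids any explicit eigenfunction computation. Two small corrections are needed. First, Corollary~\ref{veve} does not \emph{yield} the reversal identity --- it \emph{assumes} it as hypothesis; the reversal identity itself is Corollary~2.3 of \cite{Wal2}, so cite that directly. Second, equation~\eqref{tritri} alone does not give the density of $\hat\mu$ with respect to $\nu_{A^*}\otimes\nu_A$, nor that the minus-side marginal is $\mu^*$; you also need the companion relation $\int e^{W(y|x)}\,d\nu_A(x)=c\,\varphi_{A^*}(y)$. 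With those adjustments your argument is sound, and your final paragraph correctly identifies (and resolves) the one genuine technical point, namely that the normalization of $A$ stays in the Walters family because $\varphi_A$ is itself constant on the cylinders of Definition~\ref{ouy}.
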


\begin{corollary}
	Let $\theta^{-1}_*$ be the pull-back of $\mu^*$ by $\theta^{-1}$. In the above conditions, $\mu = \theta^{-1}_*\mu^*$.
	\label{normpotentialscor2}
\end{corollary}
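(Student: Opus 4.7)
The plan is to deduce the corollary directly from Theorem \ref{normpotentialtheo1} together with the uniqueness of the invariant probability having a given Jacobian. The identity $J = J^* \circ \theta^{-1}$ says that the log-Jacobian of $\mu$ on $\Omega^+$ equals the log-Jacobian of $\mu^*$ on $\Omega^-$ transported through the natural identification $\theta^{-1}\colon \Omega^+ \to \Omega^-$ (reading off the ``$y$ part'' after a swap). The measure $\theta^{-1}_*\mu^*$ on $\Omega^+$ is defined by $(\theta^{-1}_*\mu^*)(E) = \mu^*(\theta(E))$ for each Borel set $E \subset \Omega^+$.

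First I would show that $\theta^{-1}_*\mu^*$ is $\sigma$-invariant. This reduces to the observation that, under the identification given by $\theta^{-1}$, the shift $\sigma$ on $\Omega^+$ corresponds to the shift on $\Omega^-$ used in the Ruelle operator $\mathcal{L}_{A^*}$; combined with the $\sigma$-invariance of $\mu^*$, this yields the invariance of the pushforward. Second, I would compute the Jacobian of $\theta^{-1}_*\mu^*$: by the change of variables formula, if $J^*$ is the Jacobian of $\mu^*$, then the Jacobian of $\theta^{-1}_*\mu^*$ is $J^* \circ \theta^{-1}$, which by Theorem \ref{normpotentialtheo1} is exactly $J$.

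Third, I would invoke uniqueness. Once one knows that the probability $\theta^{-1}_*\mu^*$ is $\sigma$-invariant on $\Omega^+$ and satisfies $\mathcal{L}^*_{\log J}(\theta^{-1}_*\mu^*) = \theta^{-1}_*\mu^*$, the fact (cf.\ \cite{PP}) that the equilibrium measure for the normalized potential $\log J$ is the unique such invariant probability forces $\mu = \theta^{-1}_*\mu^*$.

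The main obstacle I expect is the bookkeeping of the identifications: $\mu$ lives on $\Omega^+$, $\mu^*$ on $\Omega^-$, and $\theta$ is defined on the product $\hat{\Omega}$, so one has to be careful in formalizing that pushing a Jacobian by $\theta^{-1}$ really is a Jacobian in the correct sense (the eigenmeasure equation for the transfer operator transforms naturally, but one must verify that the sum over preimages $\sigma^{-1}(z)$ in $\Omega^+$ matches the sum over preimages in $\Omega^-$ after the identification). Apart from this, the argument is purely a uniqueness-of-the-fixed-point-of-$\mathcal{L}^*_{\log J}$ statement.
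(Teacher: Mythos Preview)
Your proposal is correct and takes essentially the same route as the paper: both deduce the corollary from Theorem~\ref{normpotentialtheo1} (the identity $J = J^* \circ \theta^{-1}$) together with uniqueness of the equilibrium state for a given normalized potential. The only cosmetic difference is that the paper invokes Walters' convergence $\mathcal{L}^n_{\overline{A}} f \to \int f\, d\mu$, so that the equality of Ruelle operators $\mathcal{L}_{\overline{A}} = \mathcal{L}_{\overline{A}^* \circ \theta^{-1}}$ directly forces equality of the limits, rather than the dual-eigenmeasure characterization from \cite{PP} that you use; the identification bookkeeping you flag is present (and equally harmless) in both versions.
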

\begin{proof}
	Since we are supposing that the series $\sum (a_n - a)$ and $\sum (c_n - c)$ converges, $A$ and $A^*$ has unique equilibrium states; $\mu$ and $\mu^*$, respectively. Let $\overline{A}$ and $\overline{A}^*$ be the normalized potentials associated to $A$ and $A^*$, respectively. As described in \cite{Wal2}, for any continuous functions $f: \Omega \to \R$, 
	\begin{align*}
	\L^n_{\overline{A}} f \to \int f \ d\mu \ , \ \text{and} \ \ \L^n_{\overline{A}^*} f \to \int f \circ \theta^{-1} \ d\mu^*  = \int f \ d \left( \theta^{-1}_* \mu^* \right) \ ,
	\end{align*}
	in the uniform topology (the right side of the arrows denotes, with language abuse, constant functions which assumes that values). Since $A$ and $A^*$ has the same Jacobians, $\L_{\overline{A}} = \L_{\overline{A}^* \circ \theta^{-1}}$, and thus $\int f \ d\mu = \int f \ d \left( \theta^{-1}_* \mu^* \right)$ for any continuous function $f: \Omega \to \R$. From Riesz - Markov Representation Theorem, it follows that $\mu = \theta^{-1}_* \mu^*$.
\end{proof}

\begin{remark}
	Corollary \ref{normpotentialscor2} can be generalized: in the class $R(\Omega)$ if the equilibrium states of two potentials have the same Jacobians, then the equilibrium states are equal. 
\end{remark}

\section{A generalization} \label{lala}

Let $\M(\sigma)$ and $\M(\hat{\sigma})$ be the set of Borel invariant probability measures for $\sigma$ and $\hat{\sigma}$, respectively. We show the existence of a a bijection between these sets, which preserves entropy. Given $\mu \in \M(\sigma)$, define $\hat{\mu}$, the {\bf extension of $\mu$ to $\hat{\Omega}$}, as being the measure in $\M(\sigma)$ satisfying
\begin{align*}
\hat{\mu}(<a_n...a_1|b_1...b_m>) = \mu([a_n....a_1b_1...b_m]) \ ,
\end{align*}
for any $(a_n,...,a_1,b_1,...,b_m) \in \bigcup_{k \in \N} \A^{(k)}$. Now, there is a natural inclusion $i$ of Borel $\sigma$-algebra $\B(\Omega)$ into the Borel $\sigma$-algebra $\B(\hat{\Omega})$. It satisfies $i([a_1...a_n]) = |a_1...a_n>$ for all words in the alphabet $\A$. Given $\hat{\mu} \in \M(\hat{\sigma})$, let $\mu$ be the pull-back by the map $i$ of the restriction of $\hat{\mu}$ to $i(\B(\Omega))$. The map $\hat{\mu} \mapsto \mu$ thus defined is the inverse of the map $\mu \mapsto \hat{\mu}$ defined previously.

The natural  extension of a $\sigma$-invariant probability on $\Omega$ to a $\hat{\sigma}$-invariant probability on $\hat{\Omega}$ is unique.
Results related to this section appear in Section 7 in  \cite{LM}.

\begin{proposition}
	For all $\hat{\mu} \in \M(\hat{\sigma})$, $h_{\hat{\mu}}(\hat{\sigma}) = h_\mu(\sigma)$.
	\label{sec7prop1}
\end{proposition}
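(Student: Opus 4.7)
The plan is to apply the Kolmogorov--Sinai theorem with a one-coordinate generating partition on each side, and then verify that the corresponding finite Shannon entropies coincide via the defining identity for the extension $\hat{\mu}$.

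First, I take the time-zero partition $\xi_0 = \{\{(y|x): x_1 = 0\},\{(y|x): x_1 = 1\}\}$ of $\hat{\Omega}$, and the analogous partition $\eta_0 = \{[0],[1]\}$ of $\Omega$. Tracking the shift rule
$\hat{\sigma}(\ldots y_1 \mid x_1 x_2 \ldots) = (\ldots y_1 x_1 \mid x_2 \ldots),$
one checks that for $k \ge 0$ the partition $\hat{\sigma}^{k}\xi_0$ resolves the coordinate $x_{k+1}$, while for $k \ge 1$ the partition $\hat{\sigma}^{-k}\xi_0$ resolves the coordinate $y_k$. Therefore $\bigvee_{k \in \Z}\hat{\sigma}^{-k}\xi_0$ separates points of $\hat{\Omega}$ and generates $\B(\hat{\Omega})$, so $\xi_0$ is a two-sided generator for $\hat{\sigma}$; similarly $\eta_0$ is a one-sided generator for $\sigma$. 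The Kolmogorov--Sinai theorem then gives $h_{\hat{\mu}}(\hat{\sigma}) = h_{\hat{\mu}}(\hat{\sigma},\xi_0)$ and $h_\mu(\sigma) = h_\mu(\sigma,\eta_0)$.

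Next, I compute the joined partitions at time $n$. From the coordinate description above, $\bigvee_{k=0}^{n-1}\hat{\sigma}^{-k}\xi_0$ is precisely the partition of $\hat{\Omega}$ into bilateral cylinders $\langle a_{n-1}\ldots a_1 \mid b_1 \rangle$ depending on $(y_{n-1},\ldots,y_1,x_1)$, whereas $\bigvee_{k=0}^{n-1}\sigma^{-k}\eta_0$ is the partition of $\Omega$ into unilateral cylinders $[c_1\ldots c_n]$. Invoking the defining property of the extension,
$$\hat{\mu}(\langle a_{n-1}\ldots a_1 \mid b_1 \rangle) = \mu([a_{n-1}\ldots a_1\, b_1]),$$
the two Shannon entropies agree term by term after the relabelling $c_1 = a_{n-1},\ldots, c_{n-1} = a_1,\, c_n = b_1$. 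Dividing by $n$ and passing to the limit yields $h_{\hat{\mu}}(\hat{\sigma}) = h_\mu(\sigma)$.

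The only point requiring care is the combinatorial bookkeeping of indices under $\hat{\sigma}^{-k}$, which ensures that a cylinder of length $n$ on the bilateral side corresponds to a cylinder of length $n$ on the unilateral side (the inclusion $i$ from the preceding paragraph in the text already encodes this bijection at the level of $\sigma$-algebras). No analytic estimate or regularity of the partition is needed; the argument is purely combinatorial, relying only on Kolmogorov--Sinai plus the defining identity for $\hat{\mu}$.
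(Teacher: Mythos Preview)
Your argument is essentially the paper's: both use Kolmogorov--Sinai with the one-coordinate partition as a (two-sided, resp.\ one-sided) generator and then match the finite Shannon entropies through the cylinder correspondence between $\hat{\mu}$ and $\mu$. One bookkeeping slip worth correcting: since $\hat{\sigma}$ moves the bar to the right, the preimage partition $\hat{\sigma}^{-k}\xi_0$ resolves $x_{k+1}$ (not $y_k$), so $\bigvee_{k=0}^{n-1}\hat{\sigma}^{-k}\xi_0$ is the partition into right cylinders $|b_1\ldots b_n\rangle$ rather than $\langle a_{n-1}\ldots a_1\,|\,b_1\rangle$; this is harmless, because $\hat{\mu}(|b_1\ldots b_n\rangle)=\mu([b_1\ldots b_n])$ directly via the inclusion $i$, and the entropy comparison goes through unchanged.
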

\begin{proof}
	Let $\hat{\P} = \left\lbrace |a>;\ a \in \A \right\rbrace$ and $\P = \left\lbrace [a];\ a \in \A \right\rbrace$. $\hat{\P}$ is a bilateral generating partition for $(\hat{\Omega}, \hat{\sigma})$ (i.e., $\bigcup_{n \in \N} \hat{\P}^{\pm n} = \bigcup_{n \in \N} \bigvee_{j=-n}^n \hat{\sigma}^j (\hat{\P})$ generates the Borel $\sigma$-algebra of $\hat{\Omega}$), while $\P$ is a unilateral generating partition for $(\Omega,\sigma)$ ($\bigcup_{n \in \N} \P^n = \bigcup_{n \in \N} \bigvee_{j=0}^{n-1}\sigma^{-j}(\P)$ generates de Borel $\sigma$-algebra of $\Omega$). By Kolmogorov-Sinai's Theorem, 
	\begin{align*}
	h_{\hat{\mu}}(\hat{\sigma}) = h_{\hat{\mu}}(\hat{\sigma},\hat{\P}) \ 
	\text{and} \ h_\mu(\sigma) = h_\mu(\sigma,\P) \ .
	\end{align*} 
	Invariance of the measures under the respective dynamics, and the relation between $\mu$ and $\hat{\mu}$, implies $h_{\hat{\mu}}(\hat{\sigma},\hat{\P}) = h_\mu(\sigma,\P)$.
\end{proof}

If $P: {\mathcal C}(\Omega) \to \R$ and $\hat{P}: {\mathcal C}(\hat{\Omega}) \to \R$ are the topological pressures, Proposition \ref{sec7prop1} implies that $\hat{P}(\hat{A}) = P(A)$ for all $A \in {\mathcal C}^(\Omega)$. Thus, the restriction of any equilibrium state of $\hat{A}$ to $\Omega$ is an equilibrium state for $A$. The map $\hat{\mu} \mapsto \mu$ is a bijection. So, if $A$ has unique equilibrium state, $\hat{A}$ also has unique equilibrium state. 

Analogous results holds for the pair $\hat{A}^*$ and $A^*$. But, since
\begin{align*}
\int \hat{A}^*  \ d\hat{\mu} = \int \left[ \hat{A} \circ \hat{\sigma}^{-1} + W \circ \hat{\sigma}^{-1} - W \right] \ d \hat{\mu} = \int \hat{A} \ d\hat{\mu} \ \forall \ \hat{\mu} \in \M(\hat{\sigma}) \ ,
\end{align*}
these two potentials have the same equilibrium states. Therefore, uniqueness of equilibrium state for $A$ implies uniqueness of equilibrium state for $\hat{A}$, which implies uniqueness of equilibrium state for $\hat{A}^*$, which implies uniqueness of equilibrium state for $A^*$. Also, 
\begin{align*}
\hat{\mu}_{A^*} = \hat{\mu}_A  \ .
\end{align*}

\begin{corollary} \label{veve}
	If $\mu_A ([a_1...a_n]) = \mu_A([a_n...a_1])$ for all $(a_1,...,a_n) \in \bigcup_{m \in \N} \A^m$, then $\theta^{-1}_*\mu_{A^*} = \mu_A$.
\end{corollary}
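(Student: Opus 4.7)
The plan is to exploit the identity $\hat{\mu}_{A^*} = \hat{\mu}_A$ on $\hat{\Omega}$ that was established just before the corollary statement, together with the defining formula $\hat{\mu}(<a_n\ldots a_1|b_1\ldots b_m>) = \mu([a_n\ldots a_1 b_1\ldots b_m])$ characterizing the natural extension. The palindromic hypothesis then performs the final matchup between the two reversal patterns that arise.

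Concretely, I would pick an arbitrary cylinder $[a_1,\ldots,a_n] \subset \Omega$ and evaluate $\theta^{-1}_*\mu_{A^*}$ on it. Under the natural identification $\Omega^- \cong \Omega^+$, the map $\theta$ restricted between the two factors is (up to the built-in identification) the reversal $(\ldots,y_2,y_1) \leftrightarrow (y_1,y_2,\ldots)$; consequently $(\theta^{-1}_*\mu_{A^*})([a_1,\ldots,a_n])$ equals the $\mu_{A^*}$-mass of the pure $\Omega^-$-cylinder $\{y : y_j = a_j,\ j=1,\ldots,n\}$. By the definition of the natural extension (taking $m=0$ in the formula), this mass equals $\hat{\mu}_{A^*}(<a_n\ldots a_1|)$. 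Using $\hat{\mu}_{A^*} = \hat{\mu}_A$ and applying the same extension formula in the reverse direction, this equals $\mu_A([a_n,a_{n-1},\ldots,a_1])$. The palindromic hypothesis gives $\mu_A([a_n,\ldots,a_1]) = \mu_A([a_1,\ldots,a_n])$, so the two measures agree on every cylinder and therefore coincide by the standard $\pi$-$\lambda$ argument.

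The only real obstacle is bookkeeping the two reversals in play: the one built into the identification $\Omega^- \cong \Omega^+$ and the one encoded in $\theta$ itself. One must check that, after applying $\theta^{-1}_*$ and then passing through the extension formula, the index order has been reversed exactly once (not zero or two times), so that the palindromic hypothesis is what is needed. Once this is verified, the corollary is essentially a one-line chain of equalities, and no further computation is required.
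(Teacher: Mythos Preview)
Your approach is correct and is precisely the argument the paper has in mind: the corollary is stated without proof immediately after the identity $\hat{\mu}_{A^*}=\hat{\mu}_A$ and the description of the natural extension map $\mu\leftrightarrow\hat{\mu}$, and your chain of equalities is exactly how one unpacks that implication. Your caution about counting reversals is well placed---the single reversal enters through the extension formula $\hat{\mu}(<a_n\ldots a_1|)=\mu([a_n\ldots a_1])$, while the passage through $\theta$ (under the identification $\Omega^-\cong\Omega^+$) contributes none---and you arrive at the right place where the palindromic hypothesis finishes the argument.
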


Corollary 2.3 in \cite{Wal2} claims that equilibrium probabilities for normalized potentials on the Walters' family satisfy the hypothesis of the above corollary. It follows from \cite{Jiang} that the entropy production in this case is $0$.

\smallskip

The present work is part of the Master Dissertation of L. Y. Hataishi in Prog. Pos. Grad. em 
Matematica, UFRGS.
\smallskip


\end{document}